\numberwithin{equation}{section}
\theoremstyle{plain}
\newtheorem{thm}{Theorem}[section]
\newtheorem{lem}{Lemma}[section]
\theoremstyle{definition}
\newtheorem{rem}{Remark}
\begin{document}

\title{\vskip-0.3in Pointwise Bounds and Blow-up for Choquard-Pekar Inequalities at an Isolated Singularity}

\author{Marius Ghergu\footnote{School of Mathematics and Statistics,
    University College Dublin, Belfield, Dublin 4, Ireland; {\tt
      marius.ghergu@ucd.ie}} $\;\;$ and $\;$
Steven D. Taliaferro\footnote{Mathematics Department, Texas A\&M
    University, College Station, TX 77843-3368; {\tt stalia@math.tamu.edu}} 
\footnote{Corresponding author, Phone 001-979-845-3261, Fax
  001-979-845-6028}}

\date{}
\maketitle

\begin{abstract}
We study the behavior near the origin in $\mathbb{R}^n ,n\geq3$, of nonnegative functions
\begin{equation}\label{0.1}
 u\in C^2 (\mathbb{R}^n \backslash \{0\})\cap L^\lambda (\mathbb{R}^n )
\end{equation}
satisfying the Choquard-Pekar type inequalities 
\begin{equation}\label{0.2}
 0\leq-\Delta u\leq(|x|^{-\alpha}*u^\lambda )u^\sigma \quad\text{ in }B_2 (0)\backslash \{0\}
\end{equation}
where $\alpha\in(0,n),\lambda>0,$ and $\sigma\geq0$ are constants and
$*$ is the convolution operation in $\mathbb{R}^n$.  We provide
optimal conditions on $\alpha,\lambda$, and $\sigma$ such that
nonnegative solutions $u$ of (\ref{0.1},\ref{0.2}) satisfy
pointwise bounds near the origin.  
\medskip

\noindent {\it Keywords}: pointwise bound; blow-up; isolated singularity;
Choquard-Pekar equation; Riesz potential.
\end{abstract}


\section{Introduction}\label{sec1}
In this paper we study the behavior near the origin in $\mathbb{R}^n ,n\geq3$, of nonnegative functions 
\begin{equation}\label{1.1}
 u\in C^2 (\mathbb{R}^n \backslash \{0\})\cap L^\lambda (\mathbb{R}^n )
\end{equation}
satisfying the Choquard-Pekar type inequalities
\begin{equation}\label{1.2}
 0\leq-\Delta u\leq(|x|^{-\alpha}*u^\lambda )u^\sigma \quad\text{ in }B_2 (0)\backslash \{0\}
\end{equation}
where $\alpha\in(0,n),\lambda>0$, and $\sigma\geq0$ are constants and
$*$ is the convolution operation in $\mathbb{R}^n$. The regularity
condition $u\in L^\lambda(\mathbb{R}^n)$ in \eqref{1.1} is required
for the nonlocal convolution operation in \eqref{1.2} to make sense.

A motivation for the study of (\ref{1.1},\ref{1.2}) comes from the
equation
\begin{equation}\label{prototype}
-\Delta u=(|x|^{-\alpha}* u^{\lambda}) |u|^{\lambda-2}u\quad
\text{ in }\mathbb{R}^n,
\end{equation}
where $\alpha\in (0,n)$ and $\lambda>1$.  For $n=3$, $\alpha=1$,
and $\lambda=2$, equation \eqref{prototype} is known in the literature as the
{\it Choquard-Pekar equation} and was introduced in \cite{P1954} as a
model in quantum theory of a Polaron at rest (see also
\cite{DA2010}). Later, the equation \eqref{prototype} appears as a model
of an electron trapped in its own hole, in an approximation to
Hartree-Fock theory of one-component plasma \cite{L1976}.  More
recently, the same equation \eqref{prototype} was used in a model of
self-gravitating matter (see, e.g., \cite{J1995,MPT1998}) and it is
known in this context as the {\it Schr\"odinger-Newton equation}.

The Choquard-Pekar equation \eqref{prototype} has been investigated
for a few decades by variational methods starting with the pioneering
works of Lieb \cite{L1976} and Lions \cite{Lions1980,Lions1984}. More
recently, new and improved techniques have been devised to deal with
various forms of \eqref{prototype} (see, e.g.,
\cite{MZ2010,MZ2012,MV2013a,MV2013b,MV2015,WW2009} and the references
therein).

Using nonvariational methods, the authors in \cite{MV2013b} obtained sharp conditions for the nonexistence of nonnegative solutions to 
$$-\Delta u \geq (|x|^{-\alpha}* u^{\lambda}) u^{\sigma}
$$
in an exterior domain of $\mathbb{R}^n$, $n\geq 3$.

In this paper we address the following question.

\medskip
\noindent \textbf{Question 1.} Suppose $\alpha\in(0,n)$ and
$\lambda>0$ are constants.  For which nonnegative constants $\sigma$,
if any, does there exist a continuous function
$\varphi:(0,1)\to(0,\infty)$ such that all nonnegative solutions $u$
of (\ref{1.1},\ref{1.2}) satisfy
\begin{equation}\label{1.3}
 u(x)=O(\varphi(|x|))\quad\text{ as }x\to0
\end{equation}
and what is the optimal such $\varphi$ when it exists?

We call the function $\varphi$ in \eqref{1.3} a pointwise bound for $u$ as $x\to0$.

\begin{rem}\label{rem1}
 Let $u_\lambda \in C^2 (\mathbb{R}^n \backslash \{0\})$ be a nonnegative function such that $u_\lambda =0$ in $\mathbb{R}^n \backslash B_3 (0)$ and
  $$u_\lambda (x)= 
 \begin{cases}
  |x|^{-(n-2)} & \text{if }0<\lambda<\frac{n}{n-2}\\
  1& \text{if }\lambda\geq\frac{n}{n-2}    
 \end{cases}
 \quad\text{ for }0<|x|<2.$$ 
Then $u_\lambda \in L^\lambda
 (\mathbb{R}^n )$ and $-\Delta u_\lambda =0$ in $B_2 (0)\backslash
 \{0\}$.  Hence $u_\lambda$ is a solution of (\ref{1.1},\ref{1.2}) 
for all $\alpha\in (0,n)$, $\lambda>0$, and $\sigma\ge 0$. Thus any
pointwise bound for nonnegative solutions $u$ of (\ref{1.1},\ref{1.2})
as $x\to0$ must be at least as large as $u_\lambda (x)$ and whenever
 $u_\lambda (x)$ is such a bound it is necessarily optimal.  In this
 case we say $u$ is harmonically bounded at $0$.
\end{rem}

In order to state our results for Question 1, we define for each
$\alpha\in(0,n)$ the continuous, piecewise linear function $g_\alpha
:(0,\infty)\to [0,\infty)$ by
\begin{equation}\label{1.4}
 g_\alpha (\lambda)=
 \begin{cases}
  \frac{n}{n-2} & \text{if }0<\lambda<\frac{n-\alpha}{n-2}\\
  \frac{2n-\alpha}{n-2}-\lambda & \text{if }\frac{n-\alpha}{n-2}\leq\lambda<\frac{n}{n-2}\\
  \max\{0,1-\frac{\alpha-2}{n}\lambda \} & \text{if }\lambda\geq\frac{n}{n-2}.
 \end{cases}
\end{equation}

According to the following theorem, if the point $(\lambda,\sigma)$
lies below the graph of $\sigma=g_\alpha(\lambda)$ then all
nonnegative solutions $u$ of (\ref{1.1},\ref{1.2}) are harmonically
bounded at $0$.

\begin{thm}\label{thm1.1}
 Suppose $u$ is a nonnegative solution of (\ref{1.1},\ref{1.2}) where $\alpha\in(0,n),\lambda>0$, and 
 $$0\leq\sigma<g_\alpha (\lambda).$$  
 Then $u$ is harmonically bounded at $0$, that is, as $x\to0$,
 \begin{equation}\label{1.5}
  u(x)=
  \begin{cases}
  O(|x|^{-(n-2)}) & \text{if }0<\lambda<\frac{n}{n-2}\\
  O(1) & \text{if }\lambda\geq\frac{n}{n-2}.
  \end{cases}
 \end{equation}
Moreover, if $\lambda\ge \frac{n}{n-2}$ then $u$ has a $C^1$ extension
to the origin, that is, $u=w|_{\mathbb{R}^n\setminus\{0\}}$ for some
function $w\in C^1(\mathbb{R}^n)$.
\end{thm}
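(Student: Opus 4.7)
My plan is to argue by bootstrap, improving a pointwise upper bound $u(x)\le C|x|^{-s}$ near the origin until $s$ stabilizes at the harmonic threshold ($s=n-2$ in the first two regimes of $\lambda$, and $s=0$ in the third).

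I would begin with a representation formula for the superharmonic function $u$ on a small punctured ball. Since $u\in L^\lambda(\mathbb{R}^n)\subset L^1_{loc}$ and $-\Delta u\ge 0$, a Brezis--Lions type decomposition gives, for some small $r>0$,
$$u(x)=c|x|^{-(n-2)}+h(x)+\int_{B_r}\Gamma(x-y)(-\Delta u)(y)\,dy,\qquad 0<|x|<r,$$
with $c\ge 0$, $h$ smooth and harmonic in $B_r$, and $\Gamma$ the Newtonian kernel. The first two terms are already controlled by $|x|^{-(n-2)}$, so only the Newtonian potential of $-\Delta u$ has to be controlled. For that I split the convolution on the right-hand side of \eqref{1.2} into near-field and far-field pieces: the far-field is uniformly bounded for $|x|<1/4$ by $C\|u\|_{L^\lambda}^\lambda$, while the near-field is estimated from an a priori bound $u(y)\le C|y|^{-s}$ via the classical Riesz estimate $\int_{B_{1/2}}|y|^{-s\lambda}|x-y|^{-\alpha}\,dy\le C|x|^{-(s\lambda+\alpha-n)_+}$ (with logarithmic factors at the borderline), valid provided $s\lambda<n$.

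Substituting back into \eqref{1.2} yields $-\Delta u(x)\le C(1+|x|^{-(s\lambda+\alpha-n)_+})|x|^{-s\sigma}$, and applying $\Gamma*|y|^{-\beta}\le C|x|^{-(\beta-2)_+}$ valid for $\beta<n$ produces a new exponent $s'=[s(\lambda+\sigma)+\alpha-n-2]_+$ when $s\lambda+\alpha\ge n$, or $s'=[s\sigma-2]_+$ when $s\lambda+\alpha<n$. The condition $\sigma<g_\alpha(\lambda)$ is precisely what forces this exponent map to drive $s$ down to the harmonic value in each regime: the first branch $\lambda<(n-\alpha)/(n-2)$ is where $(n-2)\lambda<n-\alpha$ makes the convolution automatically bounded once $u=O(|x|^{-(n-2)})$, leaving only $\sigma<n/(n-2)$ as the classical removable-singularity condition for $-\Delta u\le Cu^\sigma$; the second branch is the fixed-point condition $\lambda+\sigma<(2n-\alpha)/(n-2)$ at $s=n-2$ in the combined formula; and the third branch, needed when $\lambda\ge n/(n-2)$, encodes a Hardy--Littlewood--Sobolev type criterion ensuring that one can iterate $u$ from $L^\lambda$ to $L^\infty$ by producing $-\Delta u\in L^q$ for some $q>n/2$, which by elliptic regularity and the removable-singularity theorem for bounded solutions yields the $C^1$ extension across $0$.

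The main difficulty I anticipate is the bookkeeping of exponents across the three regimes, in particular at the transitions where $s\lambda+\alpha-n$ or $s(\lambda+\sigma)+\alpha-n-2$ crosses $0$ or $n-2$, which require logarithmic refinements of the Riesz estimate and care in choosing the initial exponent. A secondary technical point is to justify the Brezis--Lions representation in the first place: one must first show $-\Delta u\in L^1_{loc}$ near $0$, which uses the global hypothesis $u\in L^\lambda(\mathbb{R}^n)$ to derive a preliminary integrability of the right-hand side of \eqref{1.2}.
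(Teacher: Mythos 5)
The central obstruction to your plan is that a pointwise bootstrap cannot be initialized. For a nonnegative superharmonic function on the punctured ball with $-\Delta u \in L^1(B_1)$, the representation \eqref{2.4} does \emph{not} yield an a priori pointwise bound of the form $u(x)\le C|x|^{-s}$ for any finite $s$: the Newtonian potential of an $L^1$ function is finite a.e.\ but can be arbitrarily large along a sequence $x_j\to 0$ (indeed, exactly this ``bump'' phenomenon is used to construct the counterexamples in Lemmas \ref{lem2.2}--\ref{lem2.3}). So the first application of the near-field estimate, which requires the hypothesis $u(y)\le C|y|^{-s}$ with $s\lambda<n$, has no starting point. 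You flag this only obliquely as ``care in choosing the initial exponent,'' but no choice of initial $s$ is available; the pointwise bound is precisely what has to be \emph{proved}, not assumed.

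The paper sidesteps this in three distinct ways, none of which is a pointwise bootstrap. In the subcritical case $0<\lambda<\frac{n-\alpha}{n-2}$ (Theorem \ref{thm3.1}), it only uses the $L^p$ integrability $v\in L^{n/(n-2+\varepsilon)}(B_1)$ furnished by \eqref{2.3}: this alone, via Riesz potential estimates, shows $I_{n-\alpha}(v^\lambda)\in L^\infty$, reducing \eqref{2.2} to the scalar inequality $-\Delta v\le Cv^\sigma$ to which the removable-singularity theorem of \cite{T2001} applies. In the intermediate case $\frac{n-\alpha}{n-2}\le\lambda<\frac{n}{n-2}$ (Theorem \ref{thm4.1}), the proof is by contradiction: assuming $|x_j|^{n-2}v(x_j)\to\infty$, one rescales $f_j(\xi)=-r_j^n\Delta v(x_j+r_j\xi)$ and derives \eqref{4.11}--\eqref{4.12}, then uses the $L^p$-bootstrap Lemma \ref{lem4.1} on the sequence $\{f_j\}$ (not on $u$) to reach the contradiction. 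In the supercritical case $\lambda\ge\frac{n}{n-2}$ (Theorem \ref{thm5.1}), the bootstrap is again in Lebesgue norms, iterating Lemma \ref{lem5.1} from the given $v\in L^\lambda(B_1)$ up to $L^\infty$. In every case the available initial datum is an $L^p$ bound, never a pointwise one, and the argument is structured accordingly.

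That said, the arithmetic in your exponent map is sound and correctly locates the threshold: the condition $\sigma<g_\alpha(\lambda)$ is exactly what makes the map $s\mapsto [s(\lambda+\sigma)+\alpha-n-2]_+$ contracting on the relevant range (e.g., for $\lambda\ge\frac{n}{n-2}$, $\sigma<1-\frac{\alpha-2}{n}\lambda$ is equivalent to $\frac{n}{\lambda}<\frac{n+2-\alpha}{\lambda+\sigma-1}$). This is precisely the fixed-point structure that underlies the $L^p$ iterations in Lemmas \ref{lem4.1} and \ref{lem5.1} (compare your exponent bookkeeping with \eqref{4.25}, \eqref{5.15}, \eqref{5.18}); the issue is purely that the iteration has to run in $L^p$ rather than in pointwise decay rates. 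You should also separate the intermediate case: it cannot be handled by a single iteration scheme on $u$ itself, and the paper's rescaled contradiction argument (needed because the Newtonian potential term in \eqref{2.4} can dominate $|x|^{-(n-2)}$ at isolated points) is a genuinely different idea that your proposal omits.
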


By Remark \ref{rem1} the bound \eqref{1.5} for $u$ is optimal.

By the next theorem, if the point $(\lambda,\sigma)$ lies above the
graph of $\sigma=g_\alpha(\lambda)$ then there does not exist a
pointwise bound for nonnegative solutions of (\ref{1.1},\ref{1.2}) as
$x\to0$.

\begin{thm}\label{thm1.2}
 Suppose $\alpha,\lambda$, and $\sigma$ are constants satisfying
 $$\alpha\in(0,n), \quad \lambda>0, \quad \text{and}\quad\ \sigma>g_\alpha (\lambda).$$
 Let $\varphi:(0,1)\to(0,\infty)$ be a continuous function satisfying
 $$\lim_{t\to0^+}\varphi(t)=\infty.$$
 Then there exists a nonnegative solution $u$ of (\ref{1.1},\ref{1.2}) such that
 $$u(x)\neq O(\varphi(|x|))\quad\text{ as }x\to0.$$
\end{thm}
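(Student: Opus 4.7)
The plan is to construct $u$ as a superposition $u=\sum_{k=1}^{\infty}u_k$ of well-separated nonnegative superharmonic bumps concentrated at a sequence of points $x_k\to 0$, with peak heights $M_k:=u_k(x_k)$ chosen so large that $M_k>\varphi(|x_k|)$ for every $k$; then $u(x_k)\ge M_k>\varphi(|x_k|)$ gives the required violation $u\ne O(\varphi(|x|))$. The hypothesis $\sigma>g_\alpha(\lambda)$ will enter as the algebraic compatibility between the size of the bumps, their $L^\lambda$-mass, and the nonlinear Choquard-type right-hand side.

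Concretely, I would fix $x_k$ with $|x_k|=2^{-k}$ and bump radii $r_k\ll|x_k|$ so that the balls $B_{2r_k}(x_k)\subset B_2(0)\setminus\{0\}$ are pairwise disjoint and accumulate only at $0$. Each bump is taken to be the Newtonian potential $u_k=\Gamma*f_k$ of a smooth nonnegative density $f_k$ supported in $B_{r_k/2}(x_k)$ with $\|f_k\|_\infty\sim M_k/r_k^2$. Then $u_k$ is smooth, nonnegative, peaked at $x_k$ with $u_k(x_k)\sim M_k$, satisfies $-\Delta u_k=f_k\ge 0$ everywhere, and decays outside $B_{r_k/2}(x_k)$ like $M_k r_k^{n-2}/|x-x_k|^{n-2}$. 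Imposing $\sum_k M_k r_k^{n-2}<\infty$ makes $u=\sum u_k$ converge in $C^2_{\mathrm{loc}}(\mathbb{R}^n\setminus\{0\})$ with $-\Delta u=\sum f_k\ge 0$, and setting $u\equiv 0$ outside a large ball (which does not affect the inequality on $B_2(0)\setminus\{0\}$) handles any global $L^\lambda$ issue.

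At any $y\in\mathrm{supp}(f_k)$, the disjoint cores give $-\Delta u(y)=f_k(y)\lesssim M_k/r_k^2$, while $u(y)\ge u_k(y)\gtrsim M_k$, and the contribution of $u_k$ alone to $(|x|^{-\alpha}*u^\lambda)(y)$ is $\gtrsim M_k^\lambda r_k^{n-\alpha}$ when the core dominates (regime $\lambda\ge(n-\alpha)/(n-2)$) and $\gtrsim M_k^\lambda r_k^{(n-2)\lambda}$ when the polynomial tail dominates (regime $\lambda<(n-\alpha)/(n-2)$). Similarly, $\|u_k\|_\lambda^\lambda\sim M_k^\lambda r_k^n$ in the peak-dominated regime $\lambda>n/(n-2)$ and $\sim(M_k r_k^{n-2})^\lambda$ in the tail-dominated regime $\lambda<n/(n-2)$. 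The problem reduces to the algebraic compatibility of (a) $M_k\ge 2\varphi(|x_k|)$, (b) a lower bound $M_k^{\lambda+\sigma-1}r_k^{E}\gtrsim 1$ where $E=n-\alpha+2$ or $(n-2)\lambda+2$ according to the regime, and (c) the appropriate $L^\lambda$ summability. A short calculation with the ansatz $M_k=2^{dk}$, $r_k=2^{-ck}$ shows this is equivalent, in each of the three ranges of $\lambda$, to exactly the corresponding branch of $\sigma>g_\alpha(\lambda)$: $\sigma>n/(n-2)$ when $\lambda<(n-\alpha)/(n-2)$, $\sigma>(2n-\alpha)/(n-2)-\lambda$ when $(n-\alpha)/(n-2)\le\lambda<n/(n-2)$, and $\sigma>1-(\alpha-2)\lambda/n$ when $\lambda\ge n/(n-2)$.

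The main obstacle is correctly identifying in each parameter regime which term (core or tail) dominates both $(|x|^{-\alpha}*u^\lambda)(y)$ and $\|u_k\|_\lambda^\lambda$, since it is these different dominating terms that produce the piecewise-linear shape of $g_\alpha$. A secondary technical point is controlling contributions of distant bumps $u_j$ ($j\ne k$) to both the convolution at $y$ and to $u(y)$, which requires the $x_k$ to be well separated and $\sum_k M_k r_k^{n-2}$ to be summable; once these are arranged, the cross-contributions are dominated by the local $u_k$ contribution. Finally, if $\varphi$ grows faster than any chosen exponential, I would replace the uniform scaling by $M_k=\max\{2^k,2\varphi(|x_k|)\}$ and choose $r_k$ to match, which still satisfies all three conditions whenever $\sigma>g_\alpha(\lambda)$.
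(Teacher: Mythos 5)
Your proposal is correct and follows essentially the same route as the paper: a superposition of disjoint Newtonian-potential bumps concentrated at $x_j\to 0$, with the radii $r_j$ chosen small enough that the bump heights exceed $j\varphi(|x_j|)$ while the algebraic balance between $-\Delta u\lesssim M_k/r_k^2$, the lower bound $u\gtrsim M_k$, the Riesz-potential lower bound, and the $L^\lambda$-summability is satisfied precisely when $\sigma>g_\alpha(\lambda)$; this is what the paper's Lemmas \ref{lem2.2}--\ref{lem2.3} and Theorems \ref{thm3.2}, \ref{thm4.2}, \ref{thm5.2} do, with a parametrization $(\varepsilon_j,r_j)$ that is a cosmetic rescaling of your $(M_k,r_k)$. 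The only technical point you gloss over is the logarithmic correction in the $L^\lambda$-mass and the Riesz potential when $\lambda=\frac{n}{n-2}$ (and the $\log$ factor at $\lambda=\frac{n-\alpha}{n-2}$), but since $\sigma>g_\alpha(\lambda)$ is a strict inequality these subdominant logarithms do not affect the geometric ansatz, as the paper's more careful bookkeeping confirms.
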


Theorems \ref{thm1.1} and \ref{thm1.2} completely answer Question 1
when the point $(\lambda,\sigma)$ does not lie on the graph of
$g_\alpha$.  Concerning the critical case that $(\lambda,\sigma)$ lies
on the graph of $g_\alpha$ we have the following result.

\begin{thm}\label{thm1.3}
 Suppose $\alpha\in(0,n)$.
 \begin{enumerate}
 \item[(i)] If $0<\lambda<\frac{n-\alpha}{n-2}$ and $\sigma=g_\alpha
   (\lambda)$ then all nonnegative solutions $u$ of
   (\ref{1.1},\ref{1.2}) are harmonically bounded at $0$.
  \item[(ii)] If $\lambda=\frac{n-\alpha}{n-2}$ and $\sigma=g_\alpha
    (\lambda)$ then there does not exist a pointwise bound for
    nonnegative solutions $u$ of (\ref{1.1},\ref{1.2}) as $x\to0$.
  \item[(iii)] If $\alpha\in(2,n)$, $\lambda>\frac{n}{\alpha-2}$, and
    $\sigma=g_\alpha(\lambda)$ then there does not exist a pointwise
    bound for nonnegative solutions $u$ of (\ref{1.1},\ref{1.2}) as
    $x\to0$.
 \end{enumerate}
\end{thm}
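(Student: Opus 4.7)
The three parts split into a critical positive result (i) and two nonexistence-of-bound results (ii)--(iii); I treat them with different strategies.

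\textbf{Plan for part (i).} The exponent constraint $0<\lambda<\frac{n-\alpha}{n-2}$ is equivalent to $(n-2)\lambda+\alpha<n$, and my plan is to adapt the method of Theorem~\ref{thm1.1} to the borderline $\sigma=\frac{n}{n-2}$. Under any trial bound $u(x)\le C|x|^{-a}$ valid on a punctured neighborhood of $0$, the Riesz convolution $|x|^{-\alpha}*u^\lambda$ at such points is dominated by an integral of $|x-y|^{-\alpha}|y|^{-a\lambda}$ which stays locally bounded as long as $a\lambda+\alpha<n$; hence \eqref{1.2} reduces locally to the Emden--Fowler inequality $-\Delta u\le C\,u^{n/(n-2)}$ near the origin. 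Using the representation of $u$ on a small punctured ball as a Newtonian potential of $-\Delta u$ plus a harmonic correction, and starting from the crude $L^\lambda$ information on $u$, I would iterate to improve the decay exponent, the gain at each step being governed by the subcritical margin $n-\alpha-(n-2)\lambda>0$. The iteration is designed to terminate at $a=n-2$, which is the desired harmonic bound.

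\textbf{Plan for parts (ii) and (iii).} Here I construct explicit counterexamples by superposing rescaled bumps. Let $\psi\in C^\infty_c(B_1)$ be a non-negative radial bump that is superharmonic on its support (obtainable, for instance, by mollifying and truncating the Newton potential of a characteristic function), and choose $x_k\to 0$ with the balls $B_{r_k}(x_k)$ pairwise disjoint inside $B_2(0)\setminus\{0\}$. Set
\[
 u(x)=\sum_{k=1}^\infty c_k\,\psi\!\left(\frac{x-x_k}{r_k}\right).
\]
On the $k$-th bump one has $-\Delta u\sim c_k/r_k^2$, the self-contribution of the convolution at the center is $(|x|^{-\alpha}*u^\lambda)(x_k)\gtrsim c_k^\lambda r_k^{n-\alpha}$ (using $\alpha<n$), and $u^\sigma(x_k)\sim c_k^\sigma$. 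Imposing \eqref{1.2} demands the balance $c_k^{\lambda+\sigma-1}r_k^{n-\alpha+2}\gtrsim 1$, while $L^\lambda$ integrability requires $\sum c_k^\lambda r_k^n<\infty$. In case (ii) the exponent sum $\lambda+\sigma=\frac{2n-\alpha}{n-2}$ is scale-invariant; the choice $c_k=r_k^{-(n-2)}$ saturates the balance and reduces integrability to $\sum r_k^\alpha<\infty$. In case (iii), $\sigma=0$ and the choice $c_k=r_k^{-(n-\alpha+2)/(\lambda-1)}$ meets the balance; the resulting integrability exponent is $\frac{\lambda(\alpha-2)-n}{\lambda-1}$, which is strictly positive \emph{precisely} when $\lambda>\frac{n}{\alpha-2}$. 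In both cases $c_k\to\infty$, and taking $r_k$ small enough (e.g.\ $r_k=|x_k|^{N_k}$ with $N_k\to\infty$) ensures $u(x_k)\ge c_k\,\psi(0)>\varphi(|x_k|)$ for any prescribed continuous $\varphi$ blowing up at $0$.

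\textbf{Main obstacle.} The delicate piece is the borderline case (i): at $\sigma=\frac{n}{n-2}$ there is no strict gain available from H\"older or Young estimates, so the argument must exploit the strict subcriticality $(n-2)\lambda+\alpha<n$ together with the global $L^\lambda(\mathbb{R}^n)$ condition to ensure that the bootstrap actually converges to the harmonic exponent rather than stalling above it. For the counterexamples in (ii) and (iii), the careful checks are (a) producing a genuinely superharmonic radial bump so that $-\Delta u\ge 0$ holds globally, and (b) verifying that cross contributions among distinct bumps in the convolution neither invalidate the differential inequality nor spoil integrability; since both sides of \eqref{1.2} are non-negative, the cross terms only enlarge the right-hand side, so the only real constraint is that the series tails remain controlled, which is guaranteed by taking the $r_k$ to decay sufficiently fast.
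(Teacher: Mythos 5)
The main gap is in the counterexample construction for parts (ii) and (iii). A nonzero nonnegative function $\psi\in C^\infty_c(\mathbb{R}^n)$ cannot satisfy $-\Delta\psi\ge0$ everywhere, since $\int_{\mathbb{R}^n}\Delta\psi\,dx=0$ forces $-\Delta\psi$ to change sign. Your proposed $u=\sum_k c_k\,\psi\bigl((x-x_k)/r_k\bigr)$ therefore necessarily has $-\Delta u<0$ somewhere near each bump, violating the left inequality in \eqref{1.2}. Your fallback of ``mollifying and truncating the Newton potential'' also fails: truncation injects a negative boundary layer into $-\Delta\psi$ at the truncation surface. The paper's Lemmas \ref{lem2.2} and \ref{lem2.3} avoid this entirely by taking $u$ to be the Newtonian potential of a nonnegative compactly supported source $f=\sum f_j$, where the $f_j\ge0$ are smooth with disjoint supports; then $-\Delta u=f\ge0$ holds automatically, $-\Delta u=0$ away from the bumps, and a global cutoff is applied only away from the origin. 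This representation is not just a convenience: the power-law tails of the Newton potentials are what produce the logarithmic and algebraic enhancements in the lower bound \eqref{2.11} for the convolution. Your purely local self-contribution estimate $(|x|^{-\alpha}*u^\lambda)(x_k)\gtrsim c_k^\lambda r_k^{n-\alpha}$ misses those gains; in case (ii) the balance is exactly scale invariant and $\lambda+\sigma>1$, so one can absorb the missing constant by inflating $c_k$ by a fixed factor, but this only repairs the estimate, not the underlying superharmonicity defect.

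For part (i) your plan is vague precisely where it matters: at $\sigma=\tfrac{n}{n-2}$ an iteration on the decay exponent $a$ in $u\le C|x|^{-a}$ yields zero gain at the critical step, as you note yourself. The paper instead uses Brezis--Lions (Lemma \ref{lem2.1}) to get $v=u+1\in L^p(B_1(0))$ for every $p<\tfrac{n}{n-2}$, then exploits the strict inequality $(n-2)\lambda<n-\alpha$ once, via Riesz potential estimates, to conclude $I_{n-\alpha}(v^\lambda)\in L^\infty(B_1(0))$; this collapses \eqref{1.2} to the local inequality $0\le-\Delta v\le Cv^\sigma$ with $\sigma\le\tfrac{n}{n-2}$, for which the borderline harmonic bound is exactly \cite[Theorem 2.1]{T2001}. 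Without some such input on the local critical Emden--Fowler inequality, your bootstrap has no mechanism to reach the harmonic exponent.
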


If $u$ is a nonnegative solution of (\ref{1.1},\ref{1.2}) where
$(\lambda,\sigma)$ lies in the first quadrant of the
$\lambda\sigma$-plane and $\sigma\not= g_\alpha(\lambda)$ then
according to Theorems \ref{thm1.1} and \ref{thm1.2} either
\begin{enumerate}
\item[(i)] $u$ is bounded around the origin and can be extended
  to a $C^1$ function in the whole $\mathbb{R}^n$; or
\item[(ii)] $u$ can be unbounded around the origin but must satisfy
  $u=O(|x|^{-(n-2)})$ as $x\to 0$; or
\item[(iii)] no pointwise a priori bound  exists for $u$ as $x\to 0$,
  that is solutions can be arbitrarily large around the origin. 
\end{enumerate}
The regions in which these three possibilities occur are depicted in
Figs.~1--3 below.

\begin{figure}[h!]
 \centering
  \includegraphics[width=0.7\textwidth]{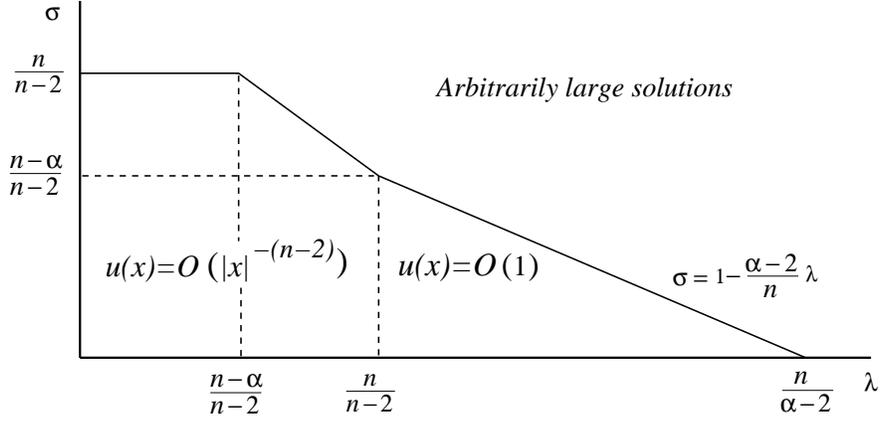}
 \caption{Case $\alpha\in (2,n)$.}
 \end{figure}

\bigskip

\begin{figure}[h!]
 \centering
  \includegraphics[width=0.7\textwidth]{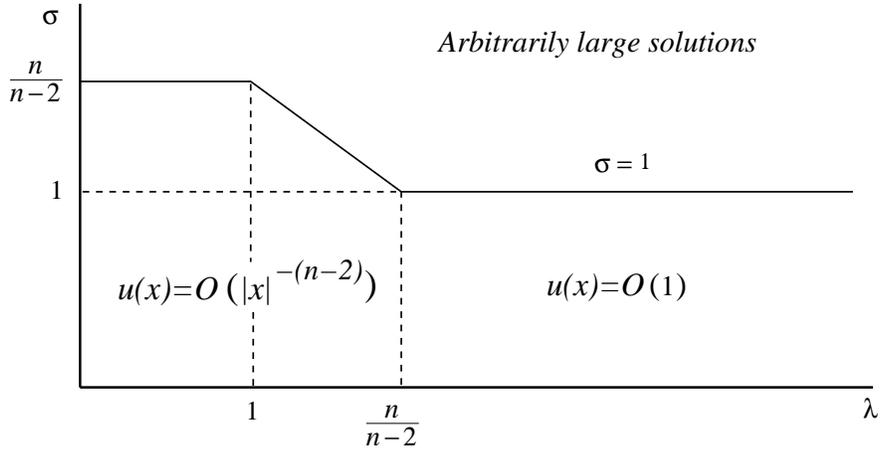}
 \caption{Case $\alpha=2$.}
 \end{figure}

\bigskip

\begin{figure}[h!]
 \centering
  \includegraphics[width=0.7\textwidth]{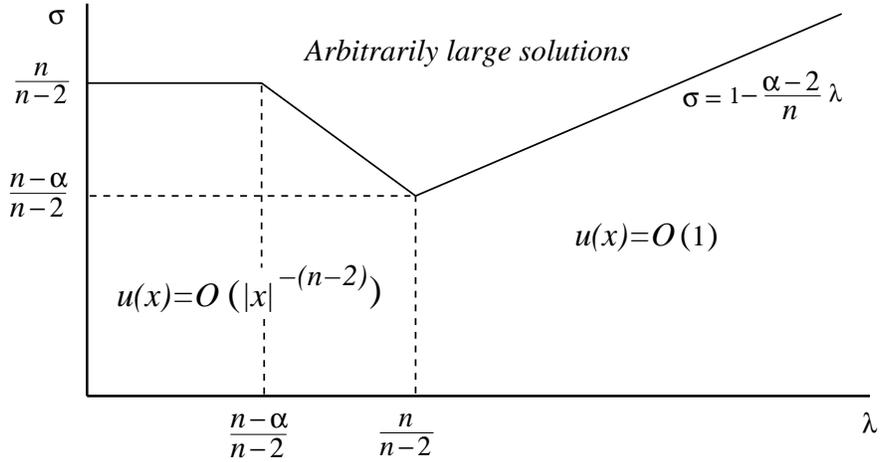}
 \caption{Case $\alpha\in (0,2)$.}
 \end{figure}

 If $\alpha\in(0,n)$ and $\lambda>0$ then one of the following three
 conditions holds:
\begin{enumerate}
 \item[(i)] $0<\lambda<\frac{n-\alpha}{n-2};$
 \item[(ii)] $\frac{n-\alpha}{n-2}\leq\lambda<\frac{n}{n-2};$
 \item[(iii)] $\frac{n}{n-2}\leq\lambda<\infty$.
\end{enumerate}

The proofs of Theorems \ref{thm1.1}--\ref{thm1.3} in case (i)(resp.
(ii), (iii)) are given in Section \ref{sec3} (resp. \ref{sec4},
\ref{sec5}).  In Section \ref{sec2} we provide some lemmas needed for
these proofs. Our approach relies on an integral representation
formula for nonnegative superharmonic functions due to Brezis and Lions
\cite{BL} (see Lemma \ref{lem2.1} below) together with various
integral estimates for Riesz potentials.

Finally we mention that throughout this paper $\omega_n$ denotes the
volume of the unit ball in $\mathbb{R}^n$ and by {\it Riesz potential
  estimates} we mean the estimates given in \cite[Lemma 7.12]{GT1983}
and \cite[Chapter 5, Theorem 1]{Stein}. See also \cite[Appendix C]{GT2016}.

\section{Preliminary lemmas}\label{sec2}
In this section we provide some lemmas needed for the proofs of our results in Sections \ref{sec3}--\ref{sec5}.

\begin{lem}\label{lem2.1}
 Suppose $u$ is a nonnegative solution of (\ref{1.1},\ref{1.2}) for some constants $\alpha\in(0,n),\lambda>0$, and $\sigma\geq0$.  Let $v=u+1$.  Then
 \begin{equation}\label{2.1}
  v\in C^2 (\mathbb{R}^n \backslash \{0\})\cap L^\lambda (B_2 (0))
 \end{equation}
 and, for some positive constant $C$, $v$ satisfies
 \begin{equation}\label{2.2}
  \begin{rcases}
   0\leq-\Delta v\leq C[I_{n-\alpha}(v^\lambda )]v^\sigma \\
   v\geq1
  \end{rcases}
  \text{ in }B_2 (0)\backslash \{0\},
 \end{equation}
 where
\begin{equation}\label{2.2.5} 
(I_\beta f)(x):=\int_{|y|<1}\frac{f(y)dy}{|x-y|^{n-\beta}}\quad\text{for }
 \beta\in (0,n).
\end{equation}
Also
 \begin{equation}\label{2.3}
  -\Delta v,v^\mu \in L^1 (B_1(0))\quad\text{for all }\mu\in[1,\frac{n}{n-2})
 \end{equation}
 and
 \begin{equation}\label{2.4}
  v(x)=\frac{m}{|x|^{n-2}}+h(x)+C\int_{|y|<1}\frac{-\Delta
    v(y)dy}{|x-y|^{n-2}}\quad\text{ for }0<|x|<1
 \end{equation}
 where $m\geq0$ and $C>0$ are constants and $h$ is harmonic and bounded in $B_1 (0)$.
\end{lem}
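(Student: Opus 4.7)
The plan is to verify \eqref{2.1}--\eqref{2.4} in order, with the main technical work concentrated in the upper bound of \eqref{2.2}.

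First, \eqref{2.1} is immediate: adding a constant to $u$ preserves $C^2$-regularity away from $0$, and since $B_2(0)$ has finite Lebesgue measure and $u\in L^\lambda(\mathbb{R}^n)$, the elementary inequality $(a+b)^\lambda\le 2^{\max(\lambda-1,0)}(a^\lambda+b^\lambda)$ yields $v\in L^\lambda(B_2(0))$. Because $\Delta$ annihilates constants, $-\Delta v=-\Delta u$, so the lower bound in \eqref{2.2} and the bound $v\ge 1$ follow at once from \eqref{1.2} and the definition $v=u+1$.

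The key step is the pointwise upper bound in \eqref{2.2}. For $x\in B_2(0)\setminus\{0\}$, I would split
\[
(|\cdot|^{-\alpha}*u^\lambda)(x)=\int_{|y|<1}\frac{u(y)^\lambda}{|x-y|^\alpha}\,dy+\int_{|y|\ge 1}\frac{u(y)^\lambda}{|x-y|^\alpha}\,dy.
\]
The first integral is bounded by $I_{n-\alpha}(v^\lambda)(x)$ since $u\le v$. For the second I would split further according to whether $|x-y|\ge 1$ or $|x-y|<1$: the far piece is at most $\|u\|_{L^\lambda(\mathbb{R}^n)}^\lambda$; the near piece, on which $|y|\ge 1$ and $|x-y|<1$, is supported in the compact annulus $\{1\le |y|\le 3\}$ (since $|x|<2$), where $u\in C^2$ is bounded by some constant $M$, so this piece is at most $M^\lambda\int_{|z|<1}|z|^{-\alpha}\,dz<\infty$. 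Both tail contributions are absolute constants, which can be absorbed into a constant multiple of $I_{n-\alpha}(v^\lambda)(x)$ because $v^\lambda\ge 1$ on $B_1(0)$ forces $I_{n-\alpha}(v^\lambda)(x)\ge c_0>0$ uniformly for $x\in B_2(0)$. Multiplying the resulting estimate by $u^\sigma\le v^\sigma$ yields \eqref{2.2}.

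Finally, \eqref{2.3} and \eqref{2.4} follow from the Brezis--Lions representation theorem: since $v\ge 0$ is classically superharmonic in $B_2(0)\setminus\{0\}$, the theorem produces constants $m\ge 0$ and $C>0$ and a harmonic function $h$ bounded on $B_1(0)$ for which \eqref{2.4} holds, and it delivers $-\Delta v\in L^1(B_1(0))$ along the way. The integrability $v^\mu\in L^1(B_1(0))$ for $\mu\in[1,n/(n-2))$ then follows directly from \eqref{2.4}: the summand $m|x|^{-(n-2)}$ lies in $L^\mu(B_1(0))$ precisely in this range, $h$ is bounded, and by the Riesz potential estimates cited in the introduction the Newton potential of an $L^1$ function lies in $L^\mu$ for every $\mu<n/(n-2)$.

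I expect the tail estimate for the convolution in \eqref{2.2} to be the main obstacle, since the convolution is taken over all of $\mathbb{R}^n$ but must be dominated by a Riesz potential whose integration domain is only $B_1(0)$; the crucial trick is the lower bound $I_{n-\alpha}(v^\lambda)\ge c_0>0$ on $B_2(0)$ supplied by the shift $v=u+1$, which lets any additive tail constants be absorbed into the main term.
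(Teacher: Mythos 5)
Your proof is correct and follows essentially the same strategy as the paper's: split the convolution into a near part giving $I_{n-\alpha}(v^\lambda)$ and a tail bounded by a constant, absorb the constant using $v\ge 1$, then invoke Brezis--Lions for \eqref{2.3}--\eqref{2.4} and Riesz potential estimates to upgrade $v\in L^1$ to $v\in L^\mu$ for $\mu<n/(n-2)$. The only cosmetic difference is that you decompose the tail $\int_{|y|\ge 1}$ by whether $|x-y|\gtrless 1$, while the paper splits by whether $|y|\gtrless 3$; both isolate a compact annulus where $u$ is continuous hence bounded and a far region controlled by $\|u\|_{L^\lambda(\mathbb{R}^n)}$.
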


\begin{proof}
 \eqref{2.1} follows from \eqref{1.1} and the definition of $v$.  
 
 For $0<|x|<2$ we have 
 \begin{align*}
  \int_{|y|>1}\frac{u(y)^\lambda
    dy}{|x-y|^\alpha}&\leq\left(\max_{1\leq |y|\leq 3}u(y)^\lambda \right)\int_{1<|y|<3}\frac{dy}{|x-y|^\alpha}+\int_{|y|>3}u(y)^\lambda dy\\
  &\le C\le C\min_{|z|\leq2}\int_{|y|<1}\frac{dy}{|z-y|^\alpha},
 \end{align*}
 where, as usual, $C$ is a positive constant whose value may change
 from line to line. Thus for $0<|x|<2$
 \begin{align*}
  \int_{\mathbb{R}^n}\frac{u(y)^\lambda dy}{|x-y|^\alpha}&\leq\int_{|y|<1}\frac{u(y)^\lambda dy}{|x-y|^\alpha}+C\int_{|y|<1}\frac{1^\lambda}{|x-y|^\alpha}dy\\
  &\leq C\int_{|y|<1}\frac{(u(y)+1)^\lambda}{|x-y|^\alpha}dy=C[I_{n-\alpha}(v^\lambda )](x).
 \end{align*}
 Hence, since $\Delta u=\Delta v$ and $u<v$ we see that \eqref{2.2} follows from \eqref{1.2}.  Also \eqref{2.1}, \eqref{2.2}, and \cite{BL} imply \eqref{2.3} with $\mu=1$ and \eqref{2.4}, which together with Riesz potential estimates, 
 yield the complete statement \eqref{2.3}.
\end{proof}

The following lemma will be needed for the proof of Theorem \ref{thm1.2} when $0<\lambda\leq\frac{n}{n-2}$.

\begin{lem}\label{lem2.2}
 Suppose $\alpha\in(0,n)$ and $\lambda\in(0,\frac{n}{n-2}]$.  Let $\{x_j \}\subset\mathbb{R}^n ,n\geq3$, and $\{r_j \},\{\varepsilon_j \}\subset(0,1)$ be sequences satisfying
 \begin{equation}\label{2.5}
  0<4|x_{j+1}|<|x_j |<1/2,
 \end{equation}
 \begin{equation}\label{2.6}
  0<r_j <|x_j |/4\quad\text{ and }
\quad\sum^{\infty}_{j=1}(\varepsilon^{\lambda}_{j}+\varepsilon_j) <\infty.
 \end{equation}
 Then there exists a nonnegative function
 \begin{equation}\label{2.7}
  u\in C^\infty (\mathbb{R}^n \backslash \{0\})\cap L^\lambda (\mathbb{R}^n )
 \end{equation}
 such that
 \begin{equation}\label{2.8}
  0\leq-\Delta u\leq
  \begin{Bmatrix}
   \frac{\varepsilon_j}{r^{n}_{j}} \qquad\qquad\quad & \text{if }0<\lambda<\frac{n}{n-2}\\
   \frac{\varepsilon_j}{r^{n}_{j}(\log\frac{1}{r_j})^{\frac{n-2}{n}}} & \text{if }\lambda=\frac{n}{n-2}\qquad
  \end{Bmatrix}
 \quad\text{ in }B_{r_j}(x_j ),
 \end{equation}
 \begin{equation}\label{2.9}
  -\Delta u=0\quad\text{ in }B_2 (0)\backslash(\{0\}\cup\bigcup^{\infty}_{j=1}B_{r_j}(x_j )),
 \end{equation}
 \begin{equation}\label{2.10}
  u\geq A
  \begin{Bmatrix}
   \frac{\varepsilon_j}{r^{n-2}_{j}} \qquad\qquad\quad & \text{if }0<\lambda<\frac{n}{n-2}\\
   \frac{\varepsilon_j}{r^{n-2}_{j}(\log\frac{1}{r_j})^{\frac{n-2}{n}}} & \text{if }\lambda=\frac{n}{n-2}\qquad
  \end{Bmatrix}
  \quad\text{ in }B_{r_j}(x_j),
 \end{equation}
 and for $x\in B_{r_j}(x_j )$
 \begin{equation}\label{2.11}
  \int_{|y|<1}\frac{u(y)^\lambda dy}{|x-y|^\alpha}\geq B\
  \begin{cases}
   \varepsilon^{\lambda}_{j} & \text{if }0<\lambda<\frac{n-\alpha}{n-2}\\
   \varepsilon^{\lambda}_{j}\log\frac{1}{r_j} & \text{if }\lambda=\frac{n-\alpha}{n-2}\\
   \varepsilon^{\lambda}_{j}r^{n-\alpha-(n-2)\lambda}_{j} & \text{if }\frac{n-\alpha}{n-2}<\lambda<\frac{n}{n-2}\\
   \varepsilon^{\lambda}_{j}r^{-\alpha}_{j}(\log\frac{1}{r_j})^{-1} & \text{if }\lambda=\frac{n}{n-2}
  \end{cases}
 \end{equation}
 where $A=A(n)$ and $B=B(n,\lambda,\alpha)$ are positive constants.
\end{lem}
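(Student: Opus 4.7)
The plan is to construct $u$ as a superposition of Newtonian potentials of smooth bumps, one in each ball $B_{r_j}(x_j)$, with masses tuned to produce exactly the rates appearing in \eqref{2.8}--\eqref{2.11}, multiplied by a global cutoff to enforce $L^\lambda(\mathbb{R}^n)$ integrability at infinity.

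Concretely, fix a radial $\phi\in C^\infty_c(B_1(0))$ with $\phi\geq 0$, $\int\phi\,dy=1$, and $\phi\geq \tfrac12$ on $B_{1/2}(0)$. Set
$$\tau_j=\begin{cases}\varepsilon_j&\text{if }0<\lambda<\tfrac{n}{n-2},\\ \varepsilon_j(\log(1/r_j))^{-(n-2)/n}&\text{if }\lambda=\tfrac{n}{n-2},\end{cases}$$
define $f_j(y)=\tau_j r_j^{-n}\phi((y-x_j)/r_j)$, and let $v_j=\Gamma*f_j$, where $\Gamma(x)=c_n|x|^{-(n-2)}$ is the fundamental solution of $-\Delta$. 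Each $v_j$ is $C^\infty$ and nonnegative on $\mathbb{R}^n$, satisfies $-\Delta v_j=f_j$, and $\|f_j\|_\infty$ matches, up to an absolute constant, the right-hand side of \eqref{2.8}. Finally set
$$u(x)=\eta(x)\sum_{j=1}^\infty v_j(x),$$
where $\eta\in C^\infty(\mathbb{R}^n)$ is nonnegative, equals $1$ on $B_2(0)$, and is supported in $B_3(0)$. The cutoff does nothing inside $B_2(0)$ but is needed to kill the tail $v_j\sim\tau_j|x|^{-(n-2)}$ at infinity, which would otherwise violate the $L^\lambda$ condition when $\lambda\leq n/(n-2)$.

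The hypotheses \eqref{2.5}--\eqref{2.6} make the balls $B_{r_j}(x_j)$ pairwise disjoint and bounded away from $0$, so on $B_2(0)$ we have $u=\sum v_k$, giving \eqref{2.8} and \eqref{2.9}. The lower bound \eqref{2.10} is the standard Newton potential estimate
$$v_j(x)\geq c_n\int_{B_{r_j/2}(x_j)}\frac{f_j(y)}{|x-y|^{n-2}}\,dy\geq c\,\tau_j\,r_j^{-(n-2)}\qquad(x\in B_{r_j}(x_j)),$$
since $|x-y|\leq 2r_j$ on the integration domain. For smoothness of $u$ on $\mathbb{R}^n\backslash\{0\}$, note that on any compact $K\subset\mathbb{R}^n\backslash\{0\}$ all but finitely many $v_j$ are harmonic in a fixed neighborhood of $K$ with bound $C\tau_j\leq C\varepsilon_j$, so interior estimates for harmonic functions and $\sum\varepsilon_j<\infty$ give $C^\infty$-convergence of $\sum v_j$ on $K$. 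Membership $u\in L^\lambda(\mathbb{R}^n)$ reduces, via disjointness of the $\mathrm{supp}\,f_k$'s and elementary estimation on the inner ball and outer region of each $v_j$, to the summability $\sum(\varepsilon_j^\lambda+\varepsilon_j)<\infty$ from \eqref{2.6}.

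The core of the proof is \eqref{2.11}. Since $u\geq v_j$ on $B_2(0)$, it suffices to bound $\int_{|y|<1}v_j^\lambda(y)|x-y|^{-\alpha}\,dy$ from below for $x\in B_{r_j}(x_j)$. For the third and fourth cases of \eqref{2.11}, where $\lambda\geq(n-\alpha)/(n-2)$, the inner ball $B_{r_j/2}(x_j)$ alone suffices: combining $v_j\geq c\tau_j r_j^{-(n-2)}$ with $|x-y|\leq 2r_j$ yields
$$\int_{B_{r_j/2}(x_j)}\frac{v_j^\lambda(y)}{|x-y|^\alpha}\,dy\;\geq\;c\,\tau_j^\lambda\,r_j^{n-\alpha-(n-2)\lambda},$$
which is precisely the third case when $\lambda<n/(n-2)$ and becomes $c\varepsilon_j^\lambda r_j^{-\alpha}(\log(1/r_j))^{-1}$ after using $\tau_j^{n/(n-2)}=\varepsilon_j^{n/(n-2)}/\log(1/r_j)$ in the critical case. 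For the first and second cases, where $\lambda\leq(n-\alpha)/(n-2)$, the inner ball is too small, so one instead integrates over the annulus $\{2r_j<|y-x_j|<1/2\}\subset B_1(0)$ (the inclusion uses $|x_j|<1/2$); on this annulus $v_j(y)\geq c\tau_j|y-x_j|^{-(n-2)}$ and $|x-y|\asymp|y-x_j|$, so passing to polar coordinates about $x_j$ produces $c\varepsilon_j^\lambda\int_{2r_j}^{1/2}r^{n-1-(n-2)\lambda-\alpha}\,dr$. This is bounded below by a positive constant when the exponent exceeds $-1$ (the first case) and by $c\log(1/(4r_j))\geq c'\log(1/r_j)$ when it equals $-1$ (the second case, using $r_j<1/8$). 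The \emph{main technical obstacle} is to keep $A$ and $B$ dependent only on $n,\lambda,\alpha$ as $|x_j|$ ranges throughout $(0,1/2)$; that is why the annulus is chosen with radii $2r_j$ and $1/2$ that depend only on $r_j$ and an absolute constant, rather than on $|x_j|$.
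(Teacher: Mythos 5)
Your construction is essentially the paper's: a superposition of Newtonian potentials of bumps supported in the balls $B_{r_j}(x_j)$ with masses $\asymp\varepsilon_j/\delta_j$, multiplied by a cutoff, with \eqref{2.11} obtained by splitting the integration region into a near ball and a far annulus around $x_j$ (the paper performs the identical split after rescaling to the variable $\eta=(y-x_j)/r_j$, so the two presentations are equivalent). The only cosmetic discrepancy is your normalization $\int\phi\,dy=1$: to obtain \eqref{2.8} without an extra multiplicative constant you should instead take $0\le\phi\le 1$ as the paper does for $\psi$, which leaves every other step of your argument intact.
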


\begin{proof}
 Let $\psi:\mathbb{R}^n \to[0,1]$ be a $C^\infty$ function whose support is $\overline{B_1(0)}$.  Define $\psi_j ,f_j :\mathbb{R}^n \to[0,\infty)$ by $\psi_j (y)=\psi(\eta)$ where $y=x_j +r_j \eta$ and $f_j =M_j \psi_j$ where 
 $M_j=\frac{\varepsilon_j}{r^{n}_{j}\delta_j}$ and 
 $$\delta_j =
 \begin{cases}
  1 & \text{if }0<\lambda<\frac{n}{n-2}\\
  (\log\frac{1}{r_j})^{\frac{n-2}{n}} & \text{if }\lambda=\frac{n}{n-2}.
 \end{cases}
 $$
 Since
 $$\int_{\mathbb{R}^n}f_j (y)dy=M_j \int_{\mathbb{R}^n}\psi(\eta)r^{n}_{j}d\eta\leq\varepsilon_j \int_{\mathbb{R}^n}\psi(\eta)d\eta$$
 and by \eqref{2.5} and \eqref{2.6}$_1$ the supports $\overline{B_{r_j}(x_j )}$ of the functions $f_j$ are disjoint and contained in $B_{3/4}(0)$ we see by \eqref{2.6}$_2$ that
 \begin{equation}\label{2.12}
  f:=\sum^{\infty}_{j=1}f_j \in C^\infty (\mathbb{R}^n \backslash
  \{0\})\cap L^1 (\mathbb{R}^n )\quad \text{and}\quad \text{supp}(f)\subset B_1(0).
 \end{equation}
 Defining
 $$v_j (y)=\int_{\mathbb{R}^n}\frac{f_j (z)dz}{|y-z|^{n-2}}$$
 and making the change of variables
 $$x=x_j +r_j \xi,\quad y=x_j +r_j \eta,\quad \text{and}\quad z=x_j +r_j \zeta,$$
 we find for $\beta\in[0,n)$ and $R\in[\frac{1}{2},2]$ that
 \begin{align}\label{2.13}
  \notag \int_{|y-x_j |<R}\frac{v_j (y)^\lambda dy}{|x-y|^\beta}&=\int_{|\eta|<R/r_j}\frac{\left(\int_{\mathbb{R}^n}\frac{M_j \psi(\zeta)r^{n}_{j}d\zeta}{r^{n-2}_{j}|\eta-\zeta|^{n-2}}\right)^\lambda}{r^{\beta}_{j}|\xi-\eta|^\beta}r^{n}_{j}d\eta\\
  &=\varepsilon^{\lambda}_{j}\delta^{-\lambda}_{j}r^{n-\beta-(n-2)\lambda}_{j}\int_{|\eta|<R/r_j}\frac{\left(\int_{\mathbb{R}^n}\frac{\psi(\zeta)d\zeta}{|\eta-\zeta|^{n-2}}\right)^\lambda}{|\xi-\eta|^\beta}d\eta.
 \end{align}
 Also
 \begin{equation}\label{2.14}
  0<C_1(n)<\frac{\int_{\mathbb{R}^n}\frac{\psi(\zeta)d\zeta}{|\eta-\zeta|^{n-2}}}{\frac{1}{|\eta|^{n-2}+1}}<C_2 (n)<\infty\quad\text{ for }\eta\in\mathbb{R}^n .
 \end{equation}
 Taking $\beta=0$ and $R=2$ in \eqref{2.13} and using \eqref{2.14} we get
 \begin{equation}\label{2.15}
  \int_{|y-x_j |<2}v_j (y)^\lambda dy\leq C\varepsilon^{\lambda}_{j}\delta^{-\lambda}_{j}r^{n-(n-2)\lambda}_{j}\int_{|\eta|<2/r_j}\left(\frac{1}{|\eta|^{n-2}+1}\right)^\lambda d\eta\leq C(n,\lambda)\varepsilon^{\lambda}_{j}
 \end{equation}
 because $\lambda(n-2)\leq n$.  Defining 
 $$v(x):=\frac{1}{n(n-2)\omega_n}\int_{\mathbb{R}^n}\frac{f(y)dy}{|x-y|^{n-2}}=\frac{1}{n(n-2)\omega_n}\sum^{\infty}_{j=1}v_j
 (x)\quad \text{ for }x\in\mathbb{R}^n$$
 and using \eqref{2.15} we get for $1\leq\lambda\leq\frac{n}{n-2}$ that
 $$n(n-2)\omega_n \| v\|_{L^\lambda (B_1(0))}\leq\sum^{\infty}_{j=1}\| v_j \|_{L^\lambda (B_1(0))}\leq\sum^{\infty}_{j=1}\| v_j \|_{L^\lambda (B_2 (x_j ))}\leq C\sum^{\infty}_{j=1}\varepsilon_j <\infty$$
 by \eqref{2.6}.
 
 If $\lambda\in(0,1)$ then using \eqref{2.15} we see that
 \begin{align*}
  n(n-2)\omega_n \| v\|_{L^\lambda (B_1(0)}&=\|\sum^{\infty}_{j=1}v_j \|_{L^\lambda (B_1(0))}=\left(\int_{B_1(0)}\left(\sum^{\infty}_{j=1}v_j (y)\right)^\lambda dy\right)^{1/\lambda}\\ 
  &\leq\left(\int_{B_1(0)\subset B_2 (x_j )}\sum^{\infty}_{j=1}v_j
    (y)^\lambda dy\right)^{1/\lambda}\\
&\leq\left(\sum^{\infty}_{j=1}\int_{B_2 (x_j )}v_j (y)^\lambda dy\right)^{1/\lambda}\leq\left(\sum^{\infty}_{j=1}C\varepsilon^{\lambda}_{j}\right)^{1/\lambda}<\infty
 \end{align*}
 by \eqref{2.6}.  Thus by \eqref{2.12}
 \begin{equation}\label{2.16}
  v\in C^\infty (\mathbb{R}^n \backslash \{0\})\cap L^\lambda (B_1(0))
 \end{equation}
 and
 \begin{equation}\label{2.17}
  -\Delta v=f\quad\text{ in }\mathbb{R}^n \backslash \{0\}.
 \end{equation}
 Taking $\beta=\alpha\in(0,n)$ and $R=\frac{1}{2}$ in \eqref{2.13} and
 using \eqref{2.14} we find for $|x-x_j |<r_j$ (i.e. $|\xi|<1$) that
 $$(n(n-2)\omega_n )^\lambda \int_{|y|<1}\frac{v(y)^\lambda
   dy}{|x-y|^\alpha}
\ge\int_{|y-x_j|<1/2}\frac{v_j(y)^\lambda
   dy}{|x-y|^\alpha}
\geq C\varepsilon^{\lambda}_{j}\delta^{-\lambda}_{j}r^{n-\alpha-(n-2)\lambda}_{j}I_j (\xi)$$
 where
 \begin{align*}
  I_j (\xi)&:=\int_{|\eta|<\frac{1}{2r_
    j}}\frac{\left(\frac{1}{|\eta|^{n-2}+1}\right)^\lambda}{|\xi-\eta|^\alpha}d\eta\\ 
&\ge\int_{|\eta|<2}\frac{\left(\frac{1}{2^{n-2}+1}\right)^\lambda}{|\xi-\eta|^\alpha}d\eta 
 +\int_{2<|\eta|<\frac{1}{2r_j}}\frac{\left(\frac{1}{2|\eta|^{n-2}}\right)^\lambda}{|\xi-\eta|^\alpha}d\eta\\
  &\geq C(n,\lambda,\alpha)+\left(\frac{2}{3}\right)^\alpha \frac{1}{2^\lambda}\int_{2<|\eta|<\frac{1}{2r_j}}\frac{1}{|\eta|^{(n-2)\lambda+\alpha}}d\eta\\
  &\geq C(n,\lambda,\alpha)
  \begin{cases}
   \frac{1}{r^{n-\alpha-(n-2)\lambda}_{j}} & \text{if }0<\lambda<\frac{n-\alpha}{n-2}\\
   \log\frac{1}{r_j} & \text{if }\lambda=\frac{n-\alpha}{n-2}\\
   1 & \text{if }\frac{n-\alpha}{n-2}<\lambda<\frac{n}{n-2}\\
   1 & \text{if }\lambda=\frac{n}{n-2}.
  \end{cases}
 \end{align*}
 Thus for $|x-x_j |<r_j$ we have
 \begin{equation}\label{2.18}
  \int_{|y|<1}\frac{v(y)^\lambda dy}{|x-y|^\alpha}\geq C(n,\lambda,\alpha)
  \begin{cases}
   \varepsilon^{\lambda}_{j} & \text{if }0<\lambda<\frac{n-\alpha}{n-2}\\
   \varepsilon^{\lambda}_{j}\log\frac{1}{r_j} & \text{if }\lambda=\frac{n-\alpha}{n-2}\\
   \varepsilon^{\lambda}_{j}r^{n-\alpha-(n-2)\lambda}_{j} & \text{if }\frac{n-\alpha}{n-2}<\lambda<\frac{n}{n-2}\\
   \varepsilon^{\lambda}_{j}r^{-\alpha}_{j}(\log\frac{1}{r_j})^{-1} & \text{if }\lambda=\frac{n}{n-2}.
  \end{cases}
 \end{equation}
 Also, for $|x-x_j |<r_j$ we have
 \begin{align}\label{2.19}
  \notag v(x)&\geq\frac{1}{n(n-2)\omega_n}\int_{B_{r_j}(x_j )}\frac{f(y)dy}{|x-y|^{n-2}}\\
  \notag &=C(n)\int_{B_{r_j}(x_j )}\frac{M_j \psi_j (y)}{|x-y|^{n-2}}dy\\
  \notag &=C(n)\int_{|\eta|<1}\frac{M_j \psi(\eta)r^{n}_{j}}{r^{n-2}_{j}|\xi-\eta|^{n-2}}d\eta\\
  &=C(n)\frac{\varepsilon_j}{r^{n-2}_{j}\delta_j}\int_{|\eta|<1}\frac{\psi(\eta)d\eta}{|\xi-\eta|^{n-2}}\geq A\frac{\varepsilon_j}{r^{n-2}_{j}\delta_j}
 \end{align}
 where
 $$A=C(n)\min_{|\xi|\leq1}\int_{|\eta|<1}\frac{\psi(\eta)d\eta}{|\xi-\eta|^{n-2}}>0.$$
 
 Finally, letting $u=\chi v$ where $\chi\in C^\infty (\mathbb{R}^n \to[0,1])$ satisfies $\chi=1$ in $B_2 (0)$ and $\chi=0$ in $\mathbb{R}^n \backslash B_3 (0)$, it follows from \eqref{2.16}--\eqref{2.19} that $u$ satisfies 
 \eqref{2.7}--\eqref{2.11}.
\end{proof}

The following lemma will be needed for the proof of Theorem \ref{thm1.2} when $\lambda>\frac{n}{n-2}$.

\begin{lem}\label{lem2.3}
 Suppose $\alpha\in(0,n)$ and $\lambda>\frac{n}{n-2}$.  Let $\{x_j \}\subset\mathbb{R}^n$, $n\geq3$, and $\{r_j \},\{\varepsilon_j \}\subset(0,1)$ be sequences satisfying
 \begin{equation}\label{2.20}
  0<4|x_{j+1}|<|x_j |<1/2,
 \end{equation}
 \begin{equation}\label{2.21}
  0<r_j <|x_j |/4\quad\text{ and }\quad\sum^{\infty}_{j=1}\varepsilon_j <\infty.
 \end{equation}
 Then there exists a positive function
 \begin{equation}\label{2.22}
  u\in C^\infty (\mathbb{R}^n \backslash\{0\})\cap L^\lambda (\mathbb{R}^n )
 \end{equation}
 such that
 \begin{equation}\label{2.23}
  0\leq-\Delta u\leq\frac{\varepsilon_j}{r^{2+n/\lambda}_{j}}\quad\text{ in }B_{r_j}(x_j )
 \end{equation}
 \begin{equation}\label{2.24}
  -\Delta u=0\quad\text{ in }\mathbb{R}^n \backslash(\{0\}\cup\bigcup^{\infty}_{j=1}B_{r_j}(x_j ))
 \end{equation}
 \begin{equation}\label{2.25}
  u\geq\frac{A\varepsilon_j}{r^{n/\lambda}_{j}}\quad\text{ in }B_{r_j}(x_j )
 \end{equation}
 and
 \begin{equation}\label{2.26}
  \int_{|y-x_j |<r_j}\frac{u(y)^\lambda dy}{|x-y|^\alpha}
\geq\frac{B\varepsilon_j^\lambda}{r^{\alpha}_{j}}\quad\text{ for }x\in B_{r_j}(x_j )
 \end{equation}
 where $A=A(n)$ and $B=B(n,\lambda,\alpha)$ are positive constants.
\end{lem}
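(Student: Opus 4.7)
The plan is to mirror the proof of Lemma \ref{lem2.2} almost verbatim, with only the amplitude $M_j$ of the source bumps changed to reflect the stronger integrability required when $\lambda > n/(n-2)$. Specifically, I would fix a $C^\infty$ bump $\psi:\mathbb{R}^n\to[0,1]$ supported in $\overline{B_1(0)}$, set $\psi_j(y):=\psi((y-x_j)/r_j)$, and let
\begin{equation*}
 f_j := M_j\psi_j, \qquad M_j := \frac{\varepsilon_j}{r_j^{\,2 + n/\lambda}}, \qquad f := \sum_{j=1}^\infty f_j.
\end{equation*}
Conditions \eqref{2.20}, \eqref{2.21} again guarantee that the balls $\overline{B_{r_j}(x_j)}$ are pairwise disjoint and contained in $B_{3/4}(0)$. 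The final solution will be $u = \chi v$, where $v$ is (a dimensional multiple of) the Newtonian potential of $f$ and $\chi \in C^\infty_c(\mathbb{R}^n)$ satisfies $\chi \equiv 1$ on $B_2(0)$ and $\chi \equiv 0$ off $B_3(0)$.

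The single change that drives everything is the following rescaled identity, obtained by the change of variables $y = x_j + r_j\eta$, $z = x_j + r_j\zeta$ exactly as in \eqref{2.13}: for the single-bump potential $v_j(y) := \int f_j(z)\,|y-z|^{-(n-2)}\,dz$ one has $v_j(x_j + r_j\eta) = M_j r_j^{\,2}\,\widetilde v(\eta)$, where $\widetilde v(\eta) := \int \psi(\zeta)\,|\eta-\zeta|^{-(n-2)}\,d\zeta$ satisfies $\widetilde v(\eta) \asymp (|\eta|^{n-2} + 1)^{-1}$, so that for $\beta\in[0,n)$, $R\in[1/2,2]$, and $x = x_j + r_j\xi$,
\begin{equation*}
 \int_{|y-x_j|<R} \frac{v_j(y)^\lambda\,dy}{|x-y|^\beta} \;=\; \frac{\varepsilon_j^\lambda}{r_j^{\,\beta}} \int_{|\eta|<R/r_j} \frac{\widetilde v(\eta)^\lambda}{|\xi-\eta|^\beta}\,d\eta.
\end{equation*}
The hypothesis $\lambda > n/(n-2)$ enters only through the single fact $\int_{\mathbb{R}^n}\widetilde v^\lambda\,d\eta < \infty$ (since $(n-2)\lambda > n$), which replaces the integrability estimate \eqref{2.15} of Lemma \ref{lem2.2}. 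This is the one step that genuinely requires the new scaling and is the main conceptual obstacle; it is exactly what is fixed by the additional factor $r_j^{-n/\lambda}$ in $M_j$.

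The three advertised estimates then follow routinely. Taking $\beta = 0$, $R = 2$ gives $\int_{|y-x_j|<2} v_j^\lambda\,dy \leq C(n,\lambda)\varepsilon_j^\lambda$; Minkowski's inequality in $L^\lambda$ (available since $\lambda > n/(n-2) \geq 1$) applied to $\|v\|_{L^\lambda(B_1(0))} \le \sum_j \|v_j\|_{L^\lambda(B_2(x_j))}$, together with $\sum \varepsilon_j < \infty$, yields $v \in L^\lambda(B_1(0))$ and hence \eqref{2.22}. Taking $\beta = \alpha$, $R = 1/2$, the inner integral is bounded below by a positive constant uniformly for $r_j < 1/4$ and $|\xi| < 1$ (use only the contribution from $|\eta|<2$, on which $\widetilde v \geq c > 0$), giving \eqref{2.26}. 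The pointwise lower bound \eqref{2.25} is the exact analog of \eqref{2.19}: for $x = x_j + r_j\xi$ with $|\xi|<1$,
\begin{equation*}
 v(x) \;\geq\; C(n)\,v_j(x) \;=\; C(n)\,M_j r_j^{\,2}\,\widetilde v(\xi) \;\geq\; A\,\frac{\varepsilon_j}{r_j^{\,n/\lambda}}.
\end{equation*}
Finally, \eqref{2.23} is immediate from $-\Delta v = f$ and $\|f_j\|_\infty \leq M_j = \varepsilon_j/r_j^{\,2+n/\lambda}$, while \eqref{2.24} follows from $f$ vanishing off $\bigcup B_{r_j}(x_j)$. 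Strict positivity of $u$ on $B_2(0)\setminus\{0\}$ required for \eqref{2.22} is immediate from $f\geq 0$, $f\not\equiv 0$ and the strict positivity of the Newtonian kernel.
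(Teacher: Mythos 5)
Your proposal matches the paper's construction of $f_j$ and $M_j$, the change of variables, the way $\lambda>n/(n-2)$ is used to ensure $\int_{\mathbb{R}^n}\widetilde{v}^{\lambda}\,d\eta<\infty$, and the derivation of \eqref{2.23}, \eqref{2.25}, and \eqref{2.26}. However, there is one genuine gap: you carried over the cutoff $u=\chi v$ from Lemma \ref{lem2.2}, and that step is both unnecessary here and incompatible with the stated conclusions.

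Because $\lambda>n/(n-2)$, the Newtonian potential $v(x)=\tfrac{1}{n(n-2)\omega_n}\int f(y)|x-y|^{-(n-2)}\,dy$ already lies in $L^{\lambda}(\mathbb{R}^n)$: taking $\beta=0$ and integrating over all of $\mathbb{R}^n$ in the rescaled identity gives $\int_{\mathbb{R}^n}v_j^{\lambda}\leq C\varepsilon_j^{\lambda}$ (with no cap on the $\eta$-ball, since $\widetilde{v}^{\lambda}$ is globally integrable), and Minkowski gives $\|v\|_{L^{\lambda}(\mathbb{R}^n)}\leq C\sum\varepsilon_j<\infty$. This is exactly what distinguishes Lemma \ref{lem2.3} from Lemma \ref{lem2.2}: there, $\lambda\leq n/(n-2)$ forces the cutoff because $v\notin L^{\lambda}(\mathbb{R}^n)$, and accordingly \eqref{2.9} is only asserted inside $B_2(0)$ and $u$ is only required to be nonnegative. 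In Lemma \ref{lem2.3} the conclusions are stronger: \eqref{2.24} requires $-\Delta u=0$ on all of $\mathbb{R}^n\setminus(\{0\}\cup\bigcup B_{r_j}(x_j))$, and the function is asserted to be \emph{positive}. With $u=\chi v$, $-\Delta u=-\chi\Delta v-2\nabla\chi\cdot\nabla v-v\Delta\chi$ is not identically zero (nor of controlled sign) on the annulus $B_3(0)\setminus B_2(0)$ where $\chi$ transitions, so \eqref{2.24} fails there; and $u\equiv 0$ outside $B_3(0)$, so positivity fails as well. The fix is simply to take $u$ equal to the Newtonian potential of $f$ itself, with no cutoff, which is exactly what the paper does. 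Everything else in your argument then goes through unchanged.
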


\begin{proof}
 Let $\psi:\mathbb{R}^n \to[0,1]$ be a $C^\infty$ function whose support is $\overline{B_1(0)}$.  Define $\psi_j ,f_j :\mathbb{R}^n \to[0,\infty)$ by $\psi_j (y)=\psi(\eta)$ where $y=x_j +r_j \eta$ and $f_j=M_j \psi_j$ where
 $M_j =\frac{\varepsilon_j}{r^{2+n/\lambda}_{j}}$.  Since
 $$\int_{\mathbb{R}^n}f_j (y)dy=M_j \int_{\mathbb{R}^n}\psi(\eta)r^{n}_{j}d\eta=\varepsilon_j r^{\frac{(n-2)\lambda-n}{\lambda}}_{j}\int_{\mathbb{R}^n}\psi(\eta)d\eta\leq\varepsilon_j \int_{\mathbb{R}^n}\psi(\eta)d\eta$$
 and by \eqref{2.20} and \eqref{2.21}$_1$ the supports $\overline{B_{r_j}(x_j )}$ of the functions $f_j$ are disjoint and contained in $B_{3/4}(0)$ we see by \eqref{2.21}$_2$ that
 \begin{equation}\label{2.27}
  f:=\sum^{\infty}_{j=1}f_j \in C^\infty (\mathbb{R}^n \backslash\{0\})\cap L^1 (\mathbb{R}^n )\quad\text{and}\quad \text{supp}(f)\subset B_1(0).
 \end{equation}
 Defining
 $$u_j (y)=\int_{\mathbb{R}^n}\frac{f_j (z)dz}{|y-z|^{n-2}}$$
 and making the change of variables 
 $$x=x_j +r_j \xi, \quad y=x_j +r_j \eta,\quad\text{and}\quad z=x_j +r_j \zeta,$$
 we find for $\beta\in[0,n)$ that
 \begin{align}\label{2.28}
  \notag \int_{\mathbb{R}^n \text{ or }B_{r_j}(x_j )}\frac{u_j (y)^\lambda dy}{|x-y|^\beta}&=\int_{\mathbb{R}^n \text{ or }|\eta|<1}\frac{\left(\int_{\mathbb{R}^n}\frac{M_j \psi(\zeta)r^{n}_{j}d\zeta}{r^{n-2}_{j}|\eta-\zeta|^{n-2}}\right)^\lambda}{r^{\beta}_{j}|\xi-\eta|^\beta}r^{n}_{j}d\eta\\
  &=\varepsilon^{\lambda}_{j}r^{-\beta}_{j}\int_{\mathbb{R}^n \text{ or }|\eta|<1}\frac{\left(\int_{\mathbb{R}^n}\frac{\psi(\zeta)d\zeta}{|\eta-\zeta|^{n-2}}\right)^\lambda}{|\xi-\eta|^\beta}d\eta.
 \end{align}
 Also
 \begin{equation}\label{2.29}
  0<C_1(n)<\frac{\int_{\mathbb{R}^n}\frac{\psi(\zeta) d\zeta}{|\eta-\zeta|^{n-2}}}{\frac{1}{|\eta|^{n-2}+1}}<C_2 (n)<\infty\quad\text{ for }\eta\in\mathbb{R}^n .
 \end{equation}
 Taking $\beta=0$ in \eqref{2.28} and using \eqref{2.29} we get
 \begin{equation}\label{2.30}
  \int_{\mathbb{R}^n}u_j (y)^\lambda dy\leq C(n,\lambda) \varepsilon^{\lambda}_{j}\int_{\mathbb{R}^n}\left(\frac{1}{|\eta|^{n-2}+1}\right)^\lambda d\eta\leq C(n,\lambda)\varepsilon^{\lambda}_{j}
 \end{equation}
 because $\lambda>n/(n-2)$.  Defining
 \begin{equation}\label{2.31}
  u(x):=\frac{1}{n(n-2)\omega_n}\int_{\mathbb{R}^n}\frac{f(y)dy}{|x-y|^{n-2}}=\frac{1}{n(n-2)\omega_n}\sum^{\infty}_{j=1}u_j (x)\quad\text{ for }x\in\mathbb{R}^n
 \end{equation}
 and using \eqref{2.30} we get
 $$n(n-2)\omega_n \| u\|_{L^\lambda (\mathbb{R}^n )}\leq\sum^{\infty}_{j=1}\| u_j \|_{L^\lambda (\mathbb{R}^n )}\leq C\sum^{\infty}_{j=1}\varepsilon_j <\infty$$
 by \eqref{2.21}.  Thus \eqref{2.27} and \eqref{2.31} imply \eqref{2.22} and $-\Delta u=f$ in $\mathbb{R}^n \backslash\{0\}$.  Hence \eqref{2.23} and \eqref{2.24} hold.
 
 Taking $\beta=\alpha\in(0,n)$ in \eqref{2.28} and using \eqref{2.29} we get
 $$\min_{x\in \overline{B_{r_j}(x_j )}}\int_{|y-x_j
   |<r_j}\frac{u(y)^\lambda dy}{|x-y|^\alpha}\geq C(n,\lambda)
 \varepsilon^{\lambda}_{j}r^{-\alpha}_{j}\min_{|\xi|\leq1}\int_{|\eta|<1}\frac{\left(\frac{1}{|\eta|^{n-2}+1}\right)^\lambda}{|\xi-\eta|^\alpha}d\eta
\geq C(n,\lambda,\alpha)\varepsilon^{\lambda}_{j}r^{-\alpha}_{j}$$
 which proves \eqref{2.26}.

 Finally, for $|x-x_j |<r_j$ we have
 \begin{align*}
  u(x)&\geq\frac{1}{n(n-2)\omega_n}\int_{B_{r_j}(x_j )}\frac{f(y)dy}{|x-y|^{n-2}}\\
  &=C(n)\int_{B_{r_j}(x_j )}\frac{M_j \psi_j (y)}{|x-y|^{n-2}}dy\\
  &=C(n)\int_{|\eta|<1}\frac{M_j\psi(\eta)r_j^nd\eta}{r_j^{n-2}|\xi-\eta|^{n-2}}d\eta\\
  &=C(n)\frac{\varepsilon_j}{r^{n/\lambda}_{j}}\int_{|\eta|<1}\frac{\psi(\eta)d\eta}{|\xi-\eta|^{n-2}}\geq\frac{A\varepsilon_j}{r^{n/\lambda}_{j}}
 \end{align*}
 where
 $$A=C(n)\min_{|\xi|\leq1}\int_{|\eta|<1}\frac{\psi(\eta)d\eta}{|\xi-\eta|^{n-2}}>0.$$
 This proves \eqref{2.25}.
 \end{proof}

\begin{lem}\label{lem2.4}
  Suppose for some constants $\alpha\in(0,n)$, $\lambda>0$, and
  $\sigma\ge 0$ that $u$ is a nonnegative solution of
  (\ref{1.1},\ref{1.2}) and $u(x)=O(1)$ as $x\to 0$. Then $u$ has a
  $C^1$ extension to the origin, that is,
  $u=w|_{\mathbb{R}^n\setminus\{0\}}$ for some function $w\in
  C^1(\mathbb{R}^n)$.
\end{lem}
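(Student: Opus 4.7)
\medskip
\noindent\textbf{Proof plan for Lemma \ref{lem2.4}.}

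The plan is to use the Brezis--Lions representation formula \eqref{2.4} together with standard Riesz (Newtonian) potential regularity. Put $v=u+1$, so by Lemma \ref{lem2.1} we have
$$v(x)=\frac{m}{|x|^{n-2}}+h(x)+C\int_{|y|<1}\frac{-\Delta v(y)\,dy}{|x-y|^{n-2}}\quad\text{for }0<|x|<1,$$
with $h$ harmonic and bounded in $B_1(0)$, $m\ge 0$, $C>0$. I would then show successively that each of the three terms on the right is a $C^1$ function on $B_1(0)$, with $m=0$ being forced by the hypothesis $u=O(1)$.

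First I would upgrade the hypothesis: since $u=O(1)$ near the origin and $u\in C^2(\mathbb{R}^n\setminus\{0\})$, the function $v$ is bounded on $B_2(0)\setminus\{0\}$; hence $v^\lambda,v^\sigma\in L^\infty(B_1(0))$. Because $\alpha\in(0,n)$, the kernel $|x-y|^{-\alpha}$ is uniformly integrable over $|y|<1$, so
$$[I_{n-\alpha}(v^\lambda)](x)\le\|v^\lambda\|_{L^\infty(B_1)}\sup_{x\in B_2}\int_{|y|<1}\frac{dy}{|x-y|^\alpha}\le C.$$
Inserting this bound into \eqref{2.2} shows that $-\Delta v\in L^\infty(B_2(0)\setminus\{0\})$, i.e.\ $-\Delta v$ is an essentially bounded function on $B_1(0)$.

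Next I would invoke classical Newtonian/Riesz potential estimates: the convolution of $|x|^{-(n-2)}$ with an $L^\infty$ function supported in $B_1(0)$ lies in $C^{1,\beta}(\mathbb{R}^n)$ for every $\beta\in(0,1)$ (this follows, e.g., from \cite[Lemma 7.12]{GT1983} applied to first-order derivatives, or from $W^{2,p}$ elliptic regularity plus Sobolev embedding). Hence the Newtonian potential
$$N(x):=\int_{|y|<1}\frac{-\Delta v(y)\,dy}{|x-y|^{n-2}}$$
belongs to $C^1(\mathbb{R}^n)$, and in particular is bounded on $B_1(0)$. Since $h\in C^\infty(B_1)$ is bounded and $v$ is bounded by assumption, the identity \eqref{2.4} forces $m|x|^{-(n-2)}$ to be bounded near the origin, whence $m=0$.

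With $m=0$ the representation \eqref{2.4} reduces to $v=h+CN$ on $B_1(0)\setminus\{0\}$, and the right-hand side is defined and $C^1$ on all of $B_1(0)$. Defining $w$ to equal this right-hand side minus $1$ on $B_1(0)$ and to equal $u$ on $\mathbb{R}^n\setminus\{0\}$ (the two definitions agree on $B_1(0)\setminus\{0\}$ and $u$ is already $C^2$ on $\mathbb{R}^n\setminus\{0\}$) produces a function $w\in C^1(\mathbb{R}^n)$ with $w|_{\mathbb{R}^n\setminus\{0\}}=u$. I do not anticipate a serious obstacle here: the only delicate point is verifying the $C^1$ regularity of the Newtonian potential of an $L^\infty$ source, which is a standard Riesz potential estimate already cited in the introduction.
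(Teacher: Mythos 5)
Your proposal is correct and takes essentially the same approach as the paper: apply the Brezis--Lions representation \eqref{2.4} from Lemma \ref{lem2.1}, observe that boundedness of $v=u+1$ forces $m=0$, derive boundedness of $-\Delta v$ near the origin from \eqref{2.2} (equivalently from (\ref{1.1},\ref{1.2})), and then conclude via Riesz potential estimates that the Newtonian potential of the bounded source is $C^1$. Your write-up is somewhat more explicit than the paper's one-paragraph argument but the ideas and the ordering of the steps coincide.
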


\begin{proof}
Let $v=u+1$. Then by Lemma \ref{lem2.1}, $v$ satisfies
\eqref{2.4}. Since $u$, and hence $v$, is bounded in
$B_1(0)\setminus\{0\}$, the constant $m$ in \eqref{2.4} is zero and by
(\ref{1.1},\ref{1.2}) $-\Delta u$, and hence $-\Delta v$, is bounded
in $B_1(0)\setminus\{0\}$. It therefore follows from \eqref{2.4} that
$v$, and hence $u$, has a $C^1$ extension to the origin.
\end{proof}

\section{The case $0<\lambda<\frac{n-\alpha}{n-2}$}\label{sec3}
In this section we prove Theorem \ref{thm1.1}--\ref{thm1.3} when $0<\lambda<\frac{n-\alpha}{n-2}$.  For these values of $\lambda$, the following theorem implies Theorems \ref{thm1.1} and \ref{thm1.3}.
\begin{thm}\label{thm3.1}
 Suppose $u$ is a nonnegative solution of (\ref{1.1},\ref{1.2}) for some constants $\alpha\in(0,n)$, 
 \begin{equation}\label{3.1}
  0<\lambda<\frac{n-\alpha}{n-2}\quad\text{ and }\quad 0\leq\sigma\leq\frac{n}{n-2}.
 \end{equation}
 Then
 \begin{equation}\label{3.2}
  u(x)=O(|x|^{2-n})\quad\text{ as }x\to 0.
 \end{equation}
\end{thm}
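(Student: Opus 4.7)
The plan is to reduce the nonlocal Choquard--Pekar inequality to a purely semilinear one, and then derive the pointwise bound for the resulting problem via the Brezis--Lions representation formula together with a bootstrap.

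I first set $v=u+1$ and apply Lemma~\ref{lem2.1}, which supplies the integral representation $v(x)=\frac{m}{|x|^{n-2}}+h(x)+C_{0}\int_{|y|<1}\frac{-\Delta v(y)\,dy}{|x-y|^{n-2}}$, the integrability $v^{\mu}\in L^{1}(B_{1}(0))$ for every $\mu\in[1,\tfrac{n}{n-2})$ together with $-\Delta v\in L^{1}(B_{1}(0))$, and the nonlocal bound $-\Delta v\leq C[I_{n-\alpha}(v^{\lambda})]v^{\sigma}$. The hypothesis $\lambda<\tfrac{n-\alpha}{n-2}$ is equivalent to $\tfrac{n\lambda}{n-\alpha}<\tfrac{n}{n-2}$, so I can fix some $\mu$ in the nonempty open interval $\bigl(\tfrac{n\lambda}{n-\alpha},\tfrac{n}{n-2}\bigr)$. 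Then $v^{\lambda}\in L^{\mu/\lambda}(B_{1}(0))$ with $\mu/\lambda>\tfrac{n}{n-\alpha}$, so by the Riesz potential estimates cited in the introduction, $I_{n-\alpha}:L^{\mu/\lambda}\to L^{\infty}$ and hence $I_{n-\alpha}(v^{\lambda})\in L^{\infty}(\mathbb{R}^{n})$. Absorbing this bound, the problem reduces to
$$0\leq -\Delta v\leq Cv^{\sigma}\quad\text{in }B_{2}(0)\setminus\{0\},$$
with the same representation and integrability, and the task of showing $v(x)=O(|x|^{-(n-2)})$.

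Via the representation, it suffices to bound the Newton potential $I_{2}(-\Delta v)(x):=\int_{|y|<1}\frac{-\Delta v(y)}{|x-y|^{n-2}}dy$ by $C|x|^{-(n-2)}$. I split this integral at $|y-x|=|x|/2$: on the outer region $\{|y-x|\geq |x|/2\}$ the kernel is at most $(2/|x|)^{n-2}$, so the contribution is controlled by $C|x|^{-(n-2)}\|-\Delta v\|_{L^{1}(B_{1})}=O(|x|^{-(n-2)})$; on the inner region $B_{|x|/2}(x)$, which lies in the annulus $A_{|x|}:=\{|x|/2\leq|y|\leq 3|x|/2\}$, I use $-\Delta v\leq Cv^{\sigma}\leq CM(|x|)^{\sigma}$, where $M(r):=\sup_{|y|\in[r/4,4r]}v(y)$, together with $\int_{|z|<|x|/2}|z|^{-(n-2)}dz=C|x|^{2}$, so this contribution is at most $CM(|x|)^{\sigma}|x|^{2}$. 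Taking the supremum over $|x|\in[r/2,2r]$ yields the self-improving inequality $M(r)\leq Cr^{-(n-2)}+CM(r)^{\sigma}r^{2}$, or, with $\phi(r):=M(r)r^{n-2}$,
$$\phi(r)\leq C+C\phi(r)^{\sigma}r^{n-(n-2)\sigma}.$$

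The hypothesis $\sigma\leq\tfrac{n}{n-2}$ is precisely $n-(n-2)\sigma\geq 0$. In the subcritical range $\sigma<\tfrac{n}{n-2}$ the exponent is strictly positive, so $r^{n-(n-2)\sigma}\to 0$ as $r\to 0$; a continuity argument starting from the finite value of $\phi$ at $r=1$ and following $\phi$ along the small-solution branch of the algebraic curve $y=C+Cy^{\sigma}r^{n-(n-2)\sigma}$ (separated from the large branch by a gap of positive length for small $r$) forces $\phi(r)=O(1)$, yielding the desired pointwise bound. The main obstacle is the critical case $\sigma=\tfrac{n}{n-2}$, where $r^{n-(n-2)\sigma}=1$ and the algebraic inequality $\phi(r)\leq C+C\phi(r)^{n/(n-2)}$ becomes scale-invariant, so the naive bootstrap no longer closes. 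My plan there is to refine the outer-region estimate by exploiting that $-\Delta v\in L^{1}(B_{1}(0))$ forces $\int_{|y|<\rho}(-\Delta v)\,dy\to 0$ as $\rho\to 0$, which upgrades the portion of the outer contribution coming from $\{|y|<|x|\}$ to $o(|x|^{-(n-2)})$, and then to iterate the representation formula together with the Riesz estimates used in the reduction step, thereby supplying the small factor needed to absorb the critical scale-invariant constant.
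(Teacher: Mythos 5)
Your reduction step is exactly the paper's: set $v=u+1$, invoke Lemma~\ref{lem2.1}, and use the integrability $v^\mu\in L^1(B_1(0))$ for $\mu<\tfrac{n}{n-2}$ together with Riesz potential estimates to conclude $I_{n-\alpha}(v^\lambda)\in L^\infty$, hence $0\le -\Delta v\le Cv^\sigma$ near the origin. Where you diverge is that, at this point, the paper simply invokes \cite[Theorem 2.1]{T2001} for the local semilinear inequality $0\le -\Delta v\le Cv^\sigma$, $\sigma\le\tfrac{n}{n-2}$, whereas you attempt to reprove that pointwise bound from scratch. That is a legitimate thing to want to do, but your argument has a genuine gap even in the subcritical range $1<\sigma<\tfrac{n}{n-2}$, and the critical case $\sigma=\tfrac{n}{n-2}$ is only a sketch.

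The gap in the subcritical case: your self-improving inequality $\phi(r)\le C+C\phi(r)^\sigma r^{n-(n-2)\sigma}$ has, for each small $r$ and each $\sigma>1$, a solution set consisting of a bounded ``small branch'' $\{\phi\le y_1(r)\}$ and an unbounded ``large branch'' $\{\phi\ge y_2(r)\}$, with $y_1(r)\to C$ and $y_2(r)\to\infty$ as $r\to0$. Your continuity argument selects the small branch only if you can produce some $r_0$ with $\phi(r_0)\le y_1(r_0)$ (or at least $<y_2(r_0)$); knowing that $\phi(1)$ is finite does not do this, because at $r$ near $1$ the parameter $r^{n-(n-2)\sigma}$ is of order $1$ and the gap may not even exist, while at small $r$ you have no a priori control of the size of $\phi(r)$. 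The remaining scenario, namely $\phi(r)\ge y_2(r)\sim r^{-(n-(n-2)\sigma)/(\sigma-1)}\to\infty$ for all small $r$, is not ruled out by the inequality alone nor by the $L^\mu$-integrability of $v$, since $M(r)$ is a supremum and superharmonic functions admit thin, tall spikes. (For $\sigma\le1$ the exponent on $\phi$ is $\le1$ and the algebraic inequality closes directly; it is only the range $\sigma\in(1,\tfrac{n}{n-2}]$ that is problematic.)

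The fix you gesture at for the critical case --- using $g(\rho):=\int_{|y|<\rho}(-\Delta v)\,dy\to0$ --- is the right ingredient, but it must be applied to the \emph{inner} region, not the outer one. Split the inner integral at a free scale $\rho\in(0,|x|/2]$: on $B_\rho(x)$ use the pointwise bound $-\Delta v\le CM(|x|)^\sigma$ to get $\le CM(|x|)^\sigma\rho^2$, and on $B_{|x|/2}(x)\setminus B_\rho(x)$ use the mass bound $\int_{B_{|x|/2}(x)}(-\Delta v)\le g(3|x|/2)$ to get $\le\rho^{-(n-2)}g(3|x|/2)$. Optimizing in $\rho$ yields an inner contribution of order $M(|x|)^{\sigma(n-2)/n}\,g(3|x|/2)^{2/n}$, and the resulting iteration inequality on $\phi$ has the exponent $\sigma(n-2)/n\le 1$ instead of $\sigma$, together with a factor tending to $0$. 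This closes directly for $\sigma<\tfrac{n}{n-2}$ (exponent strictly below $1$) and by absorption for $\sigma=\tfrac{n}{n-2}$, with no continuity argument and no case distinction. As written, though, your proposal proves neither case and cannot be accepted without this repair.
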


\begin{proof}
 Let $v=u+1$.  Then by Lemma \ref{lem2.1} we have that \eqref{2.1}--\eqref{2.4} hold.  To prove \eqref{3.2}, it clearly suffices to prove
 \begin{equation}\label{3.3}
  v(x)=O(|x|^{2-n})\quad\text{ as }x\to 0.
 \end{equation}
 Choose $\varepsilon\in(0,1)$ such that
 \begin{equation}\label{3.4}
  \lambda<\frac{n-\alpha}{n-2+\varepsilon}.
 \end{equation}
 By \eqref{2.3} we have $v\in L^{\frac{n}{n-2+\varepsilon}}(B_1(0))$ which implies
 $$v^\lambda \in L^{\frac{n}{(n-2+\varepsilon)\lambda}}(B_1(0)).$$
 Thus, since \eqref{3.4} implies
 $$\frac{\lambda(n-2+\varepsilon)}{n}<\frac{n-\alpha}{n},$$
 we have by Riesz potential estimates that
 $$I_{n-\alpha}(v^\lambda )\in L^\infty (B_1(0)).$$
 Hence by \eqref{2.1} and \eqref{2.2}, $v$ is a $C^2$ positive
 solution of 
\[
0\leq-\Delta v\leq Cv^\sigma \quad\text{in } B_1(0)\backslash\{0\}.
\]  
Thus by \eqref{3.1} and \cite[Theorem 2.1]{T2001}, $v$ satisfies \eqref{3.3}.
\end{proof}

Our next result implies Theorem \ref{thm1.2} when $0<\lambda<\frac{n-\alpha}{n-2}$.

\begin{thm}\label{thm3.2}
 Suppose $\alpha,\lambda$, and $\sigma$ are constants satisfying $\alpha\in(0,n)$
 $$0<\lambda<\frac{n-\alpha}{n-2}\quad\text{ and }\quad\sigma>\frac{n}{n-2}.$$
 Let $\varphi:(0,1)\to(0,\infty)$ be a continuous function satisfying
 $$\lim_{t\to 0^+}\varphi(t)=\infty.$$
 Then there exists a nonnegative solution $u$ of (\ref{1.1},\ref{1.2}) such that
 \begin{equation}\label{3.5}
  u(x)\neq O(\varphi(|x|))\quad\text{ as }x\to 0.
 \end{equation}
\end{thm}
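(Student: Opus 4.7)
Since $0<\lambda<\frac{n-\alpha}{n-2}<\frac{n}{n-2}$, Lemma~\ref{lem2.2} applies and places us in the first case of the alternative~\eqref{2.11}. On each ball $B_{r_j}(x_j)$ the produced function therefore satisfies simultaneously
\[
0\le-\Delta u\le\varepsilon_j/r_j^n,\qquad u\ge A\varepsilon_j/r_j^{n-2},\qquad (|\cdot|^{-\alpha}*u^\lambda)(x)\ge B\varepsilon_j^\lambda,
\]
with the last bound crucially independent of $r_j$. The plan is to pick $\{x_j\}$, $\{\varepsilon_j\}$, $\{r_j\}$ so that the three requirements (i) summability~\eqref{2.6}, (ii) the differential inequality in~\eqref{1.2}, and (iii) $u(x_j)/\varphi(|x_j|)\to\infty$, are all met.

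I would fix $x_j:=2^{-j}e_1$, which automatically satisfies~\eqref{2.5}, and take $\varepsilon_j:=2^{-j/\min(\lambda,1)}$, so that $\sum(\varepsilon_j^\lambda+\varepsilon_j)<\infty$. The delicate choice is $r_j$: set
\[
r_j:=\min\!\left\{\tfrac{|x_j|}{5},\ \kappa\,\varepsilon_j^{(\lambda+\sigma-1)/((n-2)\sigma-n)},\ \bigl(\tfrac{\varepsilon_j}{j\,\varphi(|x_j|)}\bigr)^{1/(n-2)}\right\},
\]
where $\kappa:=(BA^\sigma)^{1/((n-2)\sigma-n)}$. The first entry secures~\eqref{2.6}$_1$, the second encodes the PDE inequality, and the third forces the blow-up.

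Verification of~\eqref{1.2} goes as follows: outside $\bigcup_j B_{r_j}(x_j)$ one has $-\Delta u=0$ by~\eqref{2.9}, so nothing is to check. On $B_{r_j}(x_j)$ the three bounds above combine to
\[
(|\cdot|^{-\alpha}*u^\lambda)(x)\,u(x)^\sigma\ \ge\ BA^\sigma\,\varepsilon_j^{\lambda+\sigma}\,r_j^{-(n-2)\sigma},
\]
and the required inequality $\varepsilon_j/r_j^n\le(|\cdot|^{-\alpha}*u^\lambda)(x)\,u(x)^\sigma$ reduces to $r_j^{(n-2)\sigma-n}\le BA^\sigma\varepsilon_j^{\lambda+\sigma-1}$, which is exactly the second entry of the min. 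For~\eqref{3.5}, the third entry gives $u(x_j)\ge A\varepsilon_j/r_j^{n-2}\ge Aj\,\varphi(|x_j|)\to\infty$.

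The main structural obstacle is to make three upper bounds on $r_j$ of very different natures --- geometric (disjointness), analytic (beating the Choquard nonlinearity), and qualitative (outpacing an arbitrary $\varphi$) --- jointly realizable. This works because both exponents $(n-2)\sigma-n$ and $\lambda+\sigma-1$ are strictly positive: the former since $\sigma>\frac{n}{n-2}$, the latter since $\sigma>\frac{n}{n-2}>1$ and $\lambda>0$. This is precisely where the hypothesis $\sigma>\frac{n}{n-2}$ is used; apart from this, the argument is bookkeeping on top of Lemma~\ref{lem2.2}.
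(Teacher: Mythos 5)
Your proof is correct and follows the paper's own route: take Lemma~\ref{lem2.2} in the regime $0<\lambda<\frac{n-\alpha}{n-2}$, then shrink $r_j$ so that (a) $r_j^{(n-2)\sigma-n}\le A^\sigma B\,\varepsilon_j^{\lambda+\sigma-1}$, which makes \eqref{2.8}, \eqref{2.10}, \eqref{2.11} combine into the differential inequality \eqref{1.2} on each $B_{r_j}(x_j)$, and (b) $A\varepsilon_j/r_j^{n-2}\ge j\varphi(|x_j|)$ up to a harmless constant, which kills any candidate bound $\varphi$. The only cosmetic difference is that you display $x_j$, $\varepsilon_j$, $r_j$ as explicit formulas (a clean choice since $(n-2)\sigma-n>0$ makes the exponent in the second entry of your min positive, exactly where $\sigma>\frac{n}{n-2}$ enters), whereas the paper simply states that $r_j$ can be decreased until \eqref{3.6}--\eqref{3.7} hold; the underlying argument and use of Lemma~\ref{lem2.2} are the same.
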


\begin{proof}
 Let $\{x_j \}\subset\mathbb{R}^n$ and $\{r_j \},\{\varepsilon_j \}\subset(0,1)$ be sequences satisfying \eqref{2.5} and \eqref{2.6}.  Holding $x_j$ and $\varepsilon_j$ fixed and decreasing $r_j$ to a sufficiently small positive number 
 we can assume
 \begin{equation}\label{3.6}
  \frac{A\varepsilon_j}{r^{n-2}_{j}}>j\varphi(|x_j |)\quad\text{ for }j=1,2,...
 \end{equation}
 and
 \begin{equation}\label{3.7}
  r^{(n-2)\sigma-n}_{j}<A^\sigma B\varepsilon^{\lambda+\sigma-1}_{j}\quad\text{ for }j=1,2,...
 \end{equation}
 where $A$ and $B$ are as in Lemma \ref{lem2.2}.  
 
 Let $u$ be as in Lemma \ref{lem2.2}. By \eqref{2.9}, $u$ satisfies \eqref{1.2} in $B_2 (0)\backslash(\{0\}\cup\cup^{\infty}_{j=1}B_{r_j}(x_j ))$.  Also, for $x\in B_{r_j}(x_j )$, it follows from \eqref{2.8}, \eqref{3.7}, \eqref{2.11}, and
 \eqref{2.10} that
 \begin{align*}
  0&\leq-\Delta u\leq\frac{\varepsilon_j}{r^{n}_{j}}=\frac{r^{(n-2)\sigma-n}_{j}}{A^\sigma B\varepsilon^{\lambda+\sigma-1}_{j}}(B\varepsilon^{\lambda}_{j})\left(\frac{A\varepsilon_j}{r^{n-2}_{j}}\right)^\sigma \\
  &\leq(|x|^{-\alpha}*u^\lambda )u^\sigma.
 \end{align*}
 Thus $u$ satisfies \eqref{1.2} in $B_2 (0)\backslash\{0\}$.  Finally by \eqref{2.10} and \eqref{3.6} we have
 $$u(x_j )\geq\frac{A\varepsilon_j}{r^{n-2}_{j}}>j\varphi(|x_j |)$$
 and thus \eqref{3.5} holds.
\end{proof}

\section{The case $\frac{n-\alpha}{n-2}\leq\lambda<\frac{n}{n-2}$}\label{sec4}
In this section we prove Theorems \ref{thm1.1}--\ref{thm1.3} when
$\frac{n-\alpha}{n-2}\leq\lambda<\frac{n}{n-2}$.  For these values of
$\lambda$, the result below implies Theorem \ref{thm1.1}.

\begin{thm}\label{thm4.1}
 Suppose $u$ is a nonnegative solution of (\ref{1.1},\ref{1.2}) for some constants $\alpha\in(0,n)$, 
 \begin{equation}\label{4.1}
  \frac{n-\alpha}{n-2}\leq\lambda<\frac{n}{n-2}\quad\text{ and }\quad 0\leq\sigma<\frac{2n-\alpha}{n-2}-\lambda.
 \end{equation}
 Then
 \begin{equation}\label{4.2}
  u(x)=O(|x|^{2-n})\quad\text{ as }x\to 0.
 \end{equation}
\end{thm}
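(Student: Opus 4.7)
The plan is to combine the Brezis--Lions representation from Lemma~\ref{lem2.1} with an iterative bootstrap on the Lebesgue exponent of $-\Delta v$, where $v=u+1$. By \eqref{2.4},
$$v(x)=\frac{m}{|x|^{n-2}}+h(x)+C\int_{|y|<1}\frac{-\Delta v(y)\,dy}{|x-y|^{n-2}},$$
with $h$ bounded and harmonic on $B_1(0)$. The first term already matches the desired bound, so proving \eqref{4.2} reduces to showing that the Newtonian potential of $-\Delta v$ is bounded in $B_{1/2}(0)$. By standard Newtonian potential estimates this follows as soon as $-\Delta v\in L^p(B_1(0))$ for some $p>n/2$.

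To obtain such integrability, I start from \eqref{2.3}: $v\in L^\mu(B_1(0))$ for every $\mu<n/(n-2)$, so $v^\lambda\in L^{\mu/\lambda}$ and $v^\sigma\in L^{\mu/\sigma}$. Because $\lambda\geq(n-\alpha)/(n-2)$, the Riesz potential estimate \cite[Lemma 7.12]{GT1983} places $I_{n-\alpha}(v^\lambda)$ in $L^q(B_1(0))$ with $1/q=\lambda/\mu-(n-\alpha)/n$. Multiplying by $v^\sigma$ and using H\"older, the right-hand side of \eqref{2.2} lies in $L^r(B_1(0))$ with
$$\frac{1}{r}=\frac{\lambda+\sigma}{\mu}-\frac{n-\alpha}{n},$$
and hence so does $-\Delta v$.

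If already $r>n/2$ we are finished. Otherwise the Newtonian potential of $-\Delta v\in L^r$ belongs to $L^{\mu_1}$ with $1/\mu_1=1/r-2/n$, and a short algebraic check shows that the subcritical assumption $\lambda+\sigma<(2n-\alpha)/(n-2)$ is precisely the condition guaranteeing $\mu_1>\mu$ when $\mu$ is close to $n/(n-2)$. Feeding the improved integrability back into the estimate and iterating finitely many times drives $r$ above $n/2$, at which point the Newtonian potential is bounded and \eqref{4.2} follows from the representation above.

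The main obstacle is the iteration step. Because of the singular term $m|x|^{-(n-2)}$ in the Brezis--Lions representation, $v$ itself cannot enjoy integrability beyond $L^{n/(n-2)}$, so the iterated improvement must be carried out for the regular piece $v-m|x|^{-(n-2)}-h$, while the contribution of the singular piece is handled separately through the explicit bound $I_{n-\alpha}(|y|^{-(n-2)\lambda})(x)\sim|x|^{n-\alpha-(n-2)\lambda}$ (with a logarithmic correction at $\lambda=(n-\alpha)/(n-2)$). Tracking both contributions through the product $I_{n-\alpha}(v^\lambda)\,v^\sigma$ is precisely where the algebraic condition $\lambda+\sigma<(2n-\alpha)/(n-2)$ appears.
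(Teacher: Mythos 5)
Your plan is genuinely different from the paper's. The paper proves the bound by contradiction: assuming $|x_j|^{n-2}v(x_j)\to\infty$ along a sequence $x_j\to 0$, it rescales by setting $f_j(\eta)=-r_j^n\Delta v(x_j+r_j\eta)$ with $r_j=|x_j|/8$, localizes the Brezis--Lions representation around $x_j$ to derive the integral inequality \eqref{4.15} for the rescaled $f_j$, and then runs a bootstrap (Lemma~\ref{lem4.1}) on the $L^p$ norms of $\{f_j\}$ on fixed balls $B_{4R}(0)$. This yields $\{f_j\}$ bounded in $L^\infty(B_{R_0}(0))$, contradicting \eqref{4.12}. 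Because the bootstrap runs on the rescaled family $\{f_j\}$ --- which lives far from the origin in the rescaled variable --- the stubborn singular term $m|x|^{-(n-2)}$ never enters: it is absorbed once and for all into the $|x_j|^{-(n-2)}$ term of \eqref{4.8}. Your proposal instead tries to bootstrap the global Lebesgue exponent of $v$ itself.

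There is a genuine gap precisely where you flag the ``main obstacle.'' The chain $v\in L^\mu \Rightarrow -\Delta v\in L^r \Rightarrow v\in L^{\mu_1}$ with $\mu_1>\mu$ cannot be iterated past $\mu=n/(n-2)$: by \eqref{2.4}, $v\geq m|x|^{-(n-2)}$, and if $m>0$ then $v\notin L^{n/(n-2)}(B_1(0))$, so the improved exponent $\mu_1$ you derive is simply false for $v$. You correctly observe that the iteration must be carried out on the regular piece $w=v-m|x|^{-(n-2)}-h$, but this is where the argument is only sketched: the nonlinearity $I_{n-\alpha}(v^\lambda)v^\sigma$ does not split as a sum of a term in $w$ alone and a fixed term in $|x|^{-(n-2)}$ alone. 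After expanding $(s+h+w)^\lambda$ and $(s+h+w)^\sigma$ with $s=m|x|^{-(n-2)}$, one gets cross terms such as $I_{n-\alpha}(s^\lambda)w^\sigma$ and $I_{n-\alpha}(w^\lambda)s^\sigma$; these mix the known pointwise decay of $s$ with the a priori $L^\mu$ information on $w$, and verifying that they still close the bootstrap under $\lambda+\sigma<\frac{2n-\alpha}{n-2}$ requires additional estimates that are neither stated nor obviously routine. In addition, your reduction ``it suffices to show $N(-\Delta v)$ is bounded'' is stronger than what is needed (only $N(-\Delta v)=O(|x|^{2-n})$ is required), and the fixed contribution $I_{n-\alpha}(s^\lambda)s^\sigma\sim|x|^{n-\alpha-(n-2)(\lambda+\sigma)}$ to $-\Delta v$ can have exponent in $(2,n)$ under the given hypotheses, so $-\Delta v\in L^{n/2+}$ --- and hence boundedness of $N(-\Delta v)$ --- need not be attainable at all; one must be content with the weaker $O(|x|^{2-n})$ bound. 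So the overall strategy could plausibly be made to work with a more delicate decomposition, but as written it has a real gap at the very step you identify, and the argument would have to be substantially different from the mere bookkeeping you indicate. The paper's rescaled-contradiction argument sidesteps these difficulties cleanly, which is presumably why it is structured as it is.
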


\begin{proof}
 Let $v=u+1$.  Then by Lemma \ref{lem2.1} we have that \eqref{2.1}--\eqref{2.4} hold.  To prove \eqref{4.2}, it clearly suffices to prove
 \begin{equation}\label{4.3}
  v(x)=O(|x|^{-(n-2)})\quad\text{ as }x\to 0.
 \end{equation}

 Since increasing $\lambda$ or $\sigma$ increases the right side of the second inequality in \eqref{2.2}$_1$, we can assume instead of \eqref{4.1} that
 \begin{equation}\label{4.4}
  \frac{n-\alpha}{n-2}<\lambda<\frac{n}{n-2},\quad\sigma>0,
\quad\text{ and }\quad 1<\lambda+\sigma<\frac{2n-\alpha}{n-2}.
 \end{equation}
 Since the increased value of $\lambda$ is less than $\frac{n}{n-2}$, it follows from \eqref{2.3} that \eqref{2.1} still holds.
 
 By \eqref{4.4} there exists $\varepsilon=\varepsilon(n,\lambda,\sigma,\alpha)\in(0,1)$ such that
 \begin{equation}\label{4.5}
 \left( \frac{n+2-\alpha}{n+2-\alpha-\varepsilon}\right)\frac{n-\alpha}{n-2}<\lambda<\frac{n}{n-2+\varepsilon}\quad\text{ and }\quad\lambda+\sigma<\frac{2n-\alpha}{n-2+\varepsilon}
 \end{equation}
 which implies
 \begin{equation}\label{4.6}
  \sigma<\frac{2n-\alpha}{n-2+\varepsilon}-\lambda<\frac{2n-\alpha}{n-2+\varepsilon}-\frac{n-\alpha}{n-2}<\frac{n}{n-2+\varepsilon}.
 \end{equation}
 Suppose for contradiction that \eqref{4.3} is false.  Then there is a sequence $\{x_j \}\subset B_{1/2}(0)\backslash \{0\}$ such that $x_j \to 0$ as $j\to\infty$ and
 \begin{equation}\label{4.7}
  \lim_{j\to\infty}|x_j |^{n-2}v(x_j )=\infty.
 \end{equation}
 Since for $|x-x_j |<|x_j |/4$
 $$\int_{\substack{|y-x_j |>|x_j |/2\\
 |y|<1}}
 \frac{-\Delta v(y)}{|x-y|^{n-2}}dy\leq\left(\frac{4}{|x_j |}\right)^{n-2}\int_{|y|<1}-\Delta v(y)dy,$$
 it follows from \eqref{2.3} and \eqref{2.4} that
 \begin{equation}\label{4.8}
  v(x)\leq C\left[\frac{1}{|x_j |^{n-2}}+\int_{|y-x_j |<|x_j |/2}\frac{-\Delta v(y)}{|x-y|^{n-2}}dy\right]\quad\text{ for }|x-x_j |<\frac{|x_j |}{4}.
 \end{equation}
 Substituting $x=x_j$ in \eqref{4.8} and using \eqref{4.7} we find that
 \begin{equation}\label{4.9}
  |x_j |^{n-2}\int_{|y-x_j |<|x_j |/2}\frac{-\Delta v(y)}{|x_j
    -y|^{n-2}}dy
\to\infty\quad\text{ as }j\to\infty.
 \end{equation}
 Also by \eqref{2.3} we have
 \begin{equation}\label{4.10}
  \int_{|y-x_j |<|x_j |/2}-\Delta v(y)dy\to 0\quad\text{ as }j\to\infty.
 \end{equation}
 Defining $f_j (\eta)=-r^{n}_{j}\Delta v(x_j +r_j \eta)$ where $r_j =|x_j |/8$ and making the change of variables $y=x_j +r_j \eta$ in \eqref{4.10} and \eqref{4.9} we get
 \begin{equation}\label{4.11}
  \int_{|\eta|<4}f_j (\eta)d\eta\to 0\quad\text{ as }j\to\infty
 \end{equation}
 and
 \begin{equation}\label{4.12}
  \int_{|\eta|<4}\frac{f_j (\eta)d\eta}{|\eta|^{n-2}}\to\infty\quad\text{ as }j\to\infty.
 \end{equation}
 Let 
 $$N(y)=\int_{|z|<1}\frac{-\Delta v(z)dz}{|y-z|^{n-2}}\quad\text{ for }0<|y|<1.$$
 By \eqref{2.3} and Riesz potential estimates, $N\in L^{\frac{n}{n-2+\varepsilon}}(B_1(0))$.  Thus $N^\lambda \in L^{\frac{n}{\lambda(n-2+\varepsilon)}}(B_1(0))$.  Hence by H\"older's inequality and \eqref{4.5} we have for $R\in(0,1]$ and $|x-x_j |<R|x_j |/8$
 that
 \begin{align}\notag
  \notag \int_{|y|<1}&\frac{N(y)^\lambda dy}{|x-y|^\alpha}-\int_{|y-x_j|<R|x_j|/4}\frac{N(y)^\lambda dy}{|x-y|^\alpha}=\int_{|y-x_j |>R|x_j |/4,|y|<1}\frac{N(y)^\lambda dy}{|x-y|^\alpha}\\  
  \notag &\leq\left(\| N^\lambda \|_{L^{\frac{n}{\lambda(n-2+\varepsilon)}}(B_1(0))}\right)\left(\int_{|y-x_j |>R|x_j |/4}\frac{dy}{|x-y|^{\alpha q}}\right)^{1/q}\text{ where }\frac{\lambda(n-2+\varepsilon)}{n}+\frac{1}{q}=1\\
\label{4.13}  &\leq C\left(\int_{|y-x_j |>R|x_j |/4}\frac{dy}{|y-x_j |^{\alpha q}}\right)^{1/q}\\
\label{4.14}  &=C\frac{1}{|x_j |^{(n-2+\varepsilon)\lambda-(n-\alpha)}}
 \end{align}
 where $C>0$ depends on $R$ but not on $j$.  In \eqref{4.13} we used the fact that
\[
  \frac{|y-x|}{|y-x_j |}\geq\frac{|y-x_j |-|x-x_j |}{|y-x_j|}
=1-\frac{|x-x_j |}{|y-x_j |}>1-\frac{1}{2}=\frac{1}{2}
\]
for $|x-x_j |<R|x_j |/8$ and $|y-x_j |>R|x_j |/4$.
 
 Since, by \eqref{2.3},
 $$N(y)\leq C\left[\frac{1}{|x_j |^{n-2}}+\int_{|z-x_j |<R|x_j |/2}\frac{-\Delta v(z)dz}{|y-z|^{n-2}}\right]\quad\text{ for }|y-x_j |<R|x_j |/4,$$
 we see for $x\in\mathbb{R}^n$ that
 $$\int_{|y-x_j |<R|x_j |/4}\frac{N(y)^\lambda dy}{|x-y|^\alpha}\leq C\left[\frac{1}{|x_j |^{(n-2)\lambda-(n-\alpha)}}+\int_{|y-x_j |<R|x_j |/4}\frac{\left(\int_{z-x_j |<R|x_j |/2}\frac{-\Delta v(z)dz}{|y-z|^{n-2}}\right)^\lambda}{|x-y|^\alpha}dy\right].$$
 It therefore follows from \eqref{2.4}, \eqref{4.4}, \cite[Corollary 3.7]{GT2015},
and \eqref{4.14} that for $|x-x_j |<R|x_j |/8$ we have
 \begin{align*}
  \int_{|y|<1}\frac{v(y)^\lambda dy}{|x-y|^\alpha}&\leq C\left[\int_{|y|<1}\frac{dy}{|x-y|^\alpha |y|^{(n-2)\lambda}}+\int_{|y|<1}\frac{N(y)^\lambda dy}{|x-y|^\alpha}\right]\\
  &\leq C\left[\frac{1}{|x_j |^{(n-2+\varepsilon)\lambda-(n-\alpha)}}+\int_{|y-x_j |<R|x_j |/4}\frac{\left(\int_{|z-x_j |<R|x_j |/2}\frac{-\Delta v(z)dz}{|y-z|^{n-2}}\right)^\lambda}{|x-y|^\alpha}dy\right] 
 \end{align*}
 where $C>0$ depends on $R$ but not on $j$.
 
 We see therefore from \eqref{2.2}, \eqref{2.3}, and \eqref{2.4} 
that for $|x-x_j|<R|x_j|/8$ and $R\in(0,1]$ we have
 \begin{align*}
  -\Delta v(x)\leq &C\left[\frac{1}{|x_j
      |^{(n-2+\varepsilon)\lambda-(n-\alpha)}}+\int_{|y-x_j |<R|x_j
      |/4}\frac{\left(\int_{|z-x_j |<R|x_j |/2}\frac{-\Delta v(z)dz}{|y-z|^{n-2}}\right)^\lambda}{|x-y|^\alpha}dy\right]\\
  &\times\left[\frac{1}{|x_j |^{(n-2)\sigma}}+\left(\int_{|y-x_j |<R|x_j
        |/2}\frac{-\Delta v(y)dy}{|x-y|^{n-2}}\right)^\sigma\right].
 \end{align*}
 Hence under the change of variables
 \[
f_j (\xi)=-r^{n}_{j}\Delta v(x),\quad x=x_j +r_j \xi,
\quad y=x_j +r_j \eta,\quad z=x_j +r_j \zeta,\quad r_j =|x_j |/8
\]
 we obtain from \eqref{4.5} that
 \begin{align}\label{4.15}
  \notag &f_j (\xi)=-r^{n}_{j}\Delta v(x)
\leq-r^{(n-2+\varepsilon)(\lambda+\sigma)-(n-\alpha)}_{j}\Delta v(x)\\
  &\leq C\left[1+\int_{|\eta|<4R}\frac{\left(\int_{|\zeta|<4R}\frac{f_j (\zeta)d\zeta}{|\eta-\zeta|^{n-2}}\right)^\lambda}{|\xi-\eta|^\alpha}d\eta\right]\left[1+\left(\int_{|\eta|<4R}\frac{f_j (\eta)d\eta}{|\xi-\eta|^{n-2}}\right)^\sigma \right]
 \end{align}
 for $|\xi|<R$ where $C>0$ depends on $R$ but not on $j$.
 
To complete the proof of Theorem \ref{thm4.1} we will need the following lemma.
\begin{lem}\label{lem4.1}
 Suppose the sequence
 \begin{equation}\label{4.16}
  \{f_j \}\text{ is bounded in }L^p (B_{4R}(0))
 \end{equation}
 for some constants $p\in[1,\frac{n}{2}]$ and $R\in(0,1]$.  
Then there exists a positive constant $C_0 =C_0 (n,\lambda,\sigma,\alpha)$ such that the sequence 
\begin{equation}\label{4.17}
 \{f_j \}\text{ is bounded in }L^q (B_R (0))
\end{equation}
for some $q\in(p,\infty)$ satisfying
\begin{equation}\label{4.18}
 \frac{1}{p}-\frac{1}{q}\geq C_0.
\end{equation}
\end{lem}

\begin{proof}
 For $R\in(0,1]$ we formally define operators $N_R$ and $I_R$ by
 $$(N_R f)(\xi)=\int_{|\eta|<4R}\frac{f(\eta)d\eta}{|\xi-\eta|^{n-2}}\quad\text{ and }\quad(I_R f)(\xi)=\int_{|\eta|<4R}\frac{f(\eta)d\eta}{|\xi-\eta|^\alpha}.$$
 Define $p_2$ by
 \begin{equation}\label{4.19}
  \frac{1}{p}-\frac{1}{p_2}=\frac{2-\varepsilon}{n}
 \end{equation}
 where $\varepsilon$ is as in \eqref{4.5}.  Then $p_2 \in(p,\infty)$ and thus by Riesz potential estimates we have
 \begin{equation}\label{4.20}
  \|(N_R f_j )^\lambda \|_{p_2 /\lambda}=\| N_R f_j \|^{\lambda}_{p_2}\leq C\| f_j \|^{\lambda}_{p}
 \end{equation}
 and
 \begin{equation}\label{4.21}
  \|(N_R f_j )^\sigma \|_{p_2 /\sigma}=\| N_R f_j \|^{\sigma}_{p_2}\leq C\| f_j \|^{\sigma}_{p}
 \end{equation}
 where $\|\cdot\|_p :=\|\cdot\|_{L^p (B_{4R}(0))}$. Since
 $$\frac{1}{p_2}=\frac{1}{p}-\frac{2-\varepsilon}{n}\leq1-\frac{2-\varepsilon}{n}=\frac{n-2+\varepsilon}{n}$$
 we see by \eqref{4.5} that
 \begin{equation}\label{4.22}
  \frac{p_2}{\lambda}>1.
 \end{equation}
 Now there are two cases to consider.
 \medskip

 \noindent \textbf{Case I.} Suppose
 \begin{equation}\label{4.23}
  \frac{p_2}{\lambda}<\frac{n}{n-\alpha}.
 \end{equation}
 Define $p_3$ and $q$ by
 \begin{equation}\label{4.24}
  \frac{\lambda}{p_2}-\frac{1}{p_3}=\frac{n-\alpha}{n}
 \end{equation}
 and
 \begin{equation}\label{4.25}
  \frac{1}{q}:=\frac{1}{p_3}+\frac{\sigma}{p_2}=\frac{\lambda+\sigma}{p_2}-\frac{n-\alpha}{n}.
 \end{equation}
 It follows from \eqref{4.22}--\eqref{4.25}, \eqref{4.19}, and \eqref{4.4} that
 \begin{equation}\label{4.26}
  1<\frac{p_2}{\lambda}<p_3 <\infty,\quad q>0,
 \end{equation}
 and
 \begin{align*}
  \frac{1}{p}-&\frac{1}{q}=\frac{1}{p}-\left((\lambda+\sigma)\left(\frac{1}{p}-\frac{2-\varepsilon}{n}\right)-\frac{n-\alpha}{n}\right)\\
  &=\frac{(2-\varepsilon)(\lambda+\sigma)+(n-\alpha)}{n}-\frac{\lambda+\sigma-1}{p}\\
  &\geq\frac{(2-\varepsilon)(\lambda+\sigma)+(n-\alpha)-n(\lambda+\sigma-1)}{n}\\
  &=\frac{2n-\alpha-(n-2+\varepsilon)(\lambda+\sigma)}{n}.
 \end{align*}
 Thus \eqref{4.18} holds by \eqref{4.5}.
 
 By \eqref{4.24}, \eqref{4.26}, \eqref{4.20}, and Riesz potential estimates we find that
 \begin{align*}
  \|(I_R ((N_R &f_j )^\lambda ))^q \|_{p_{3/q}}=\| I_R ((N_R f_j )^\lambda )\|^{q}_{p_3}\\
  &\leq C\|(N_R f_j )^\lambda \|^{q}_{p_2 /\lambda}\\
  &\leq C\| f_j \|^{\lambda q}_{p}.
 \end{align*}
 Also by \eqref{4.21} we get
 $$\|(N_R f_j )^{\sigma q}\|_{\frac{p_2}{\sigma q}}=\|(N_R f_j )^\sigma \|^{q}_{p_2 /\sigma}\leq C\| f_j \|^{\sigma q}_{p}.$$
 It therefore follows from \eqref{4.15}, \eqref{4.25}, H\"older's inequality, and \eqref{4.16} that \eqref{4.17} holds.
 \medskip

\noindent \textbf{Case II.} Suppose
\begin{equation}\label{4.27} 
\frac{p_2}{\lambda}\geq\frac{n}{n-\alpha}.
\end{equation} 
Then by Riesz potential estimates, \eqref{4.16}, and \eqref{4.20} we
find that the sequence
 \begin{equation}\label{4.28}
  \{I_R ((N_R f_j )^\lambda)\}\text{ is bounded in }L^{\gamma}(B_{4R}(0))
 \text{ for all } \gamma\in(1,\infty).
\end{equation}
Let $\hat q=p_2/\sigma$. Then by \eqref{4.19},
\[
\frac{1}{p}-\frac{1}{\hat q}=\frac{1}{p}-\frac{\sigma}{p_2}
=\frac{2-\varepsilon}{n}+\frac{1-\sigma}{p_2}.
\] 
Thus for $\sigma\le 1$ we have
\[
\frac{1}{p}-\frac{1}{\hat q}\ge\frac{2-\varepsilon}{n}>0
 \]
and for $\sigma>1$ it follows from \eqref{4.27} and \eqref{4.5} that
\begin{align*}
\frac{1}{p}-\frac{1}{\hat q}
&\ge\frac{2-\varepsilon}{n}-\frac{\sigma-1}{\frac{n\lambda}{n-\alpha}}\\
&\ge \frac{2-\varepsilon}{n}
-\frac
{\frac{2n-\alpha}{n-2}-\lambda-1}
{\frac{n\lambda}{n-\alpha}}\\
&=\frac{n+2-\alpha-\varepsilon}{n\lambda}
\left(\lambda-\left(\frac{n+2-\alpha}{n+2-\alpha-\varepsilon}\right)\frac{n-\alpha}{n-2}\right)>0. 
\end{align*}
Thus defining $q\in(p,\hat q)$ by
\[
\frac{1}{q}=\frac{\frac{1}{p}+\frac{1}{\hat q}}{2}
\]
we have for $\sigma>0$ that
\[
\frac{1}{p}-\frac{1}{q}=\frac{1}{2}\left(\frac{1}{p}-\frac{1}{\hat
    q}\right)
\ge C_0(n,\lambda,\sigma,\alpha)>0.
\]
That is \eqref{4.18} holds.

Since $q\sigma/p_2<\hat q\sigma/p_2=1$ there exists
$\gamma\in(q,\infty)$ such that
\begin{equation}\label{4.29}
\frac{q}{\gamma}+\frac{q\sigma}{p_2}=1.
\end{equation}
Also
\[
\|(I_R ((N_R f_j )^\lambda ))^q \|_{\gamma/q}
=\| I_R ((N_R f_j )^\lambda )\|^{q}_{\gamma}
\]
and by \eqref{4.21}
\[
\|(N_R f_j )^{\sigma q}\|_{\frac{p_2}{\sigma q}}=\|(N_R f_j )^\sigma
\|^{q}_{p_2 /\sigma}\leq C\| f_j \|^{\sigma q}_{p}.
\]
 It therefore follows from \eqref{4.15}, \eqref{4.29}, 
H\"older's inequality, \eqref{4.28}, and \eqref{4.16} that \eqref{4.17} holds.
\end{proof}

We return now to the proof of Theorem \ref{thm4.1}.  By \eqref{4.11} the sequence
\begin{equation}\label{4.30}
 \{f_j \}\text{ is bounded in }L^1(B_4 (0)).
\end{equation}
Starting with this fact and iterating Lemma \ref{lem4.1} a finite number of times ($m$ times is enough if $m>1/C_0$) we see that there exists $R_0 \in(0,1)$ such that the sequence
$$\{f_j \}\text{ is bounded in }L^p (B_{4R_0}(0))$$
for some $p>n/2$.  Hence by Riesz potential estimates the sequence $\{N_{R_0}f_j \}$ is bounded in $L^\infty (B_{4R_0}(0))$.  Thus \eqref{4.15} implies the sequence 
\begin{equation}\label{4.31}
 \{f_j \}\text{ is bounded in }L^\infty (B_{R_0}(0)).
\end{equation}
Since
$$\int_{|\eta|<4}\frac{f_j (\eta)d\eta}{|\eta|^{n-2}}\leq\int_{|\eta|<R_0}\frac{f_j (\eta)d\eta}{|\eta|^{n-2}}+\frac{1}{R^{n-2}_{0}}\int_{R_0 \leq|\eta|<4}f_j (\eta)d\eta$$
we see that \eqref{4.30} and \eqref{4.31} contradict \eqref{4.12}.  This contradiction completes the proof of Theorem \ref{thm4.1}.
\end{proof}

The following theorem implies Theorems \ref{thm1.2} and \ref{thm1.3} when 
$$\frac{n-\alpha}{n-2}\leq\lambda<\frac{n}{n-2}.$$

\begin{thm}\label{thm4.2}
 Suppose $\alpha,\lambda$, and $\sigma$ are constants satisfying
 $$\alpha\in(0,n),\quad\frac{n-\alpha}{n-2}\leq\lambda\leq\frac{n}{n-2}$$
 and
 \begin{align*}
  \sigma\geq\frac{n}{n-2}\qquad&\qquad\text{if }\lambda=\frac{n-\alpha}{n-2};\\
  \sigma>\frac{2n-\alpha}{n-2}-\lambda\,\,&\qquad\text{if } \frac{n-\alpha}{n-2}<\lambda<\frac{n}{n-2};\\
  \sigma>\frac{n-\alpha}{n-2}\qquad&\qquad\text{if }\lambda=\frac{n}{n-2}.
 \end{align*}
 Let $\varphi:(0,1)\to(0,\infty)$ be a continuous function satisfying 
 $$\lim_{t\to 0^+}\varphi(t)=\infty.$$
 Then there exists a nonnegative solution $u$ of (\ref{1.1},\ref{1.2}) such that
 \begin{equation}\label{4.32}
  u(x)\neq O(\varphi(|x|))\quad\text{ as }x\to 0.
 \end{equation}
\end{thm}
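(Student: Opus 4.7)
\textbf{Proof proposal for Theorem \ref{thm4.2}.} The plan is to extend the construction-by-Lemma-\ref{lem2.2} argument from the proof of Theorem \ref{thm3.2}. Fix $|x_j|=2^{-j-1}$ so that \eqref{2.5} holds, and pick $\varepsilon_j\in(0,1)$ with $\sum_{j=1}^{\infty}(\varepsilon_j^\lambda+\varepsilon_j)<\infty$. Feeding these into Lemma \ref{lem2.2} yields a nonnegative $u\in C^\infty(\mathbb{R}^n\setminus\{0\})\cap L^\lambda(\mathbb{R}^n)$ whose Laplacian is concentrated in the shrinking balls $B_{r_j}(x_j)$. As in Theorem \ref{thm3.2}, the radii $r_j\in(0,|x_j|/4)$ remain as free parameters, to be shrunk so as to simultaneously achieve: (i) $u(x_j)$, which by \eqref{2.10} is at least $A\varepsilon_j/r_j^{n-2}$ (or its logarithmically corrected analogue when $\lambda=n/(n-2)$), exceeds $j\varphi(|x_j|)$; and (ii) the differential inequality \eqref{1.2} holds on each $B_{r_j}(x_j)$.

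Condition (i) is automatic as $r_j\to0^+$ with $\varepsilon_j$ and $x_j$ frozen. For (ii), combining the upper bound \eqref{2.8} on $-\Delta u$ with the lower bounds \eqref{2.10}, \eqref{2.11} reduces matters to an elementary inequality whose form depends on the sub-range. In the first sub-range, $\lambda=(n-\alpha)/(n-2)$ and $\sigma\geq n/(n-2)$, one obtains
\[
r_j^{(n-2)\sigma-n}\leq A^\sigma B\,\varepsilon_j^{\lambda+\sigma-1}\,\log(1/r_j),
\]
whose left exponent is $\geq 0$; at the critical endpoint $\sigma=n/(n-2)$ (where it vanishes) the $\log(1/r_j)$ factor from the second branch of \eqref{2.11} saves the estimate. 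In the second sub-range, $(n-\alpha)/(n-2)<\lambda<n/(n-2)$ and $\sigma>(2n-\alpha)/(n-2)-\lambda$, the reduced inequality is
\[
r_j^{(n-2)(\lambda+\sigma)-(2n-\alpha)}\leq A^\sigma B\,\varepsilon_j^{\lambda+\sigma-1},
\]
whose left exponent is strictly positive by hypothesis. In the third sub-range, $\lambda=n/(n-2)$ and $\sigma>(n-\alpha)/(n-2)$, every factor in \eqref{2.8}--\eqref{2.11} carries a logarithmic correction; after bookkeeping the inequality becomes
\[
r_j^{(n-2)\sigma-(n-\alpha)}\leq A^\sigma B\,\varepsilon_j^{\lambda+\sigma-1}\,\bigl(\log(1/r_j)\bigr)^{-2/n-(n-2)\sigma/n},
\]
whose positive power of $r_j$ on the left overwhelms the negative power of $\log(1/r_j)$ on the right as $r_j\to0^+$.

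In each sub-range, taking $r_j$ sufficiently small meets both (i) and (ii). Outside $\bigcup_j B_{r_j}(x_j)$ the function is harmonic by \eqref{2.9}, so \eqref{1.2} is trivial there. Consequently $u$ is a nonnegative solution of \eqref{1.1}--\eqref{1.2} with $u(x_j)>j\varphi(|x_j|)$ for all $j$, yielding \eqref{4.32}. The main technical obstacle is the precise bookkeeping of the log factors in the third sub-range, where three competing logarithms interact, and at the critical endpoint $\sigma=n/(n-2)$ of the first sub-range, where the construction would fail without the logarithmic improvement in the Riesz convolution estimate \eqref{2.11} at $\lambda=(n-\alpha)/(n-2)$.
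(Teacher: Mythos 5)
Your proposal reproduces the paper's own proof of Theorem \ref{thm4.2} almost verbatim: hold $x_j,\varepsilon_j$ fixed, invoke Lemma \ref{lem2.2}, shrink $r_j$ until both the lower bound $u(x_j)>j\varphi(|x_j|)$ and the reduced scalar inequalities (which match the paper's \eqref{4.34} exactly in all three sub-ranges) hold. One small slip: $|x_j|=2^{-j-1}$ fails \eqref{2.5} since $4|x_{j+1}|=2^{-j}>|x_j|$; take, e.g., $|x_j|=5^{-j}$ instead.
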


\begin{proof}
 Let $\{x_j \}\subset\mathbb{R}^n$ and $\{r_j \},\{\varepsilon_j \}\subset(0,1)$ be sequences satisfying \eqref{2.5} and \eqref{2.6}.  Holding $x_j$ and $\varepsilon_j$ fixed and decreasing $r_j$ to a sufficiently small positive number 
 we can assume for $j=1,2,...$ that
 \begin{equation}\label{4.33}
  j\varphi(|x_j |)\leq 
  \begin{cases}
   \frac{A\varepsilon_j}{r^{n-2}_{j}} & \text{if }\frac{n-\alpha}{n-2}\leq\lambda<\frac{n}{n-2}\\
   \frac{A\varepsilon_j}{r_j^{n-2}\left(\log\frac{1}{r_j}\right)^{\frac{n-2}{n}}} & \text{if }\lambda=\frac{n}{n-2}
  \end{cases}
 \end{equation}
 and
 \begin{equation}\label{4.34}
  A^\sigma B\varepsilon^{\lambda+\sigma-1}_{j}\geq
  \begin{cases}
   r^{(n-2)\sigma-n}_{j}\left(\log\frac{1}{r_j}\right)^{-1} & \text{if }\lambda=\frac{n-\alpha}{n-2}\\
   r^{(n-2)\sigma-(2n-\alpha-(n-2)\lambda)}_{j} & \text{if }\frac{n-\alpha}{n-2}<\lambda<\frac{n}{n-2}\\
   r^{(n-2)\sigma-(n-\alpha)}_{j}\left(\log\frac{1}{r_j}\right)^{\frac{(n-2)\sigma+2}{n}} & \text{if }\lambda=\frac{n}{n-2}
  \end{cases}
 \end{equation}
 where $A$ and $B$ are as in Lemma \ref{lem2.2}.  Let $u$ be as in
 Lemma \ref{lem2.2}.  By \eqref{2.9}, $u$ satisfies \eqref{1.2} in
 $B_2 (0)\backslash(\{0\}\cup\cup^{\infty}_{j=1}B_{r_j}(x_j))$.  Also,
 for $x\in B_{r_j}(x_j )$, it follows from \eqref{2.8}, \eqref{4.34},
 \eqref{2.11}, and \eqref{2.10} that for
 $\frac{n-\alpha}{n-2}\leq\lambda<\frac{n}{n-2}$ we have
 \begin{align*}
  0\leq&-\Delta u\leq\frac{\varepsilon_j}{r^{n}_{j}}\\
  &=
  \begin{cases}
   \frac{r^{(n-2)\sigma-n}_{j}\left(\log\frac{1}{r_j}\right)^{-1}}{A^\sigma B\varepsilon^{\lambda+\sigma-1}_{j}}(B\varepsilon^{\lambda}_{j}\log\frac{1}{r_j})\left(A\frac{\varepsilon_j}{r^{n-2}_{j}}\right)^\sigma&\text{if }\lambda=\frac{n-\alpha}{n-2}\\
   \frac{r^{(n-2)\sigma-(2n-\alpha-(n-2)\lambda)}_{j}}{A^\sigma B\varepsilon^{\lambda+\sigma-1}_{j}}\left(B\varepsilon^{\lambda}_{j}r^{(n-\alpha)-(n-2)\lambda}_{j}\right)\left(A\frac{\varepsilon_j}{r^{n-2}_{j}}\right)^\sigma&\text{if }\frac{n-\alpha}{n-2}<\lambda<\frac{n}{n-2}
  \end{cases}\\
  &\leq(|x|^\alpha *u^\lambda )u^\sigma,
 \end{align*}
 and, for $\lambda=\frac{n}{n-2}$, we have
 \begin{align*}
  0&\leq-\Delta u\leq\frac{\varepsilon_j}{r^{n}_{j}(\log\frac{1}{r_j})^{\frac{n-2}{n}}}\\
  &=\frac{r^{(n-2)\sigma-(n-\alpha)}_{j}(\log\frac{1}{r_j})^{\frac{(n-2)\sigma+2}{n}}}{A^\sigma B\varepsilon^{\lambda+\sigma-1}_{j}}\left(\frac{B\varepsilon^{\lambda}_{j}r^{-\alpha}_{j}}{\log\frac{1}{r_j}}\right)\left(\frac{A\varepsilon_j}{r^{n-2}_{j}(\log\frac{1}{r_j})^{\frac{n-2}{n}}}\right)^\sigma \\
  &\leq(|x|^{-\alpha}*u^\lambda )u^\sigma .
 \end{align*}
 Thus $u$ satisfies \eqref{1.2} in $B_2 (0)\backslash\{0\}$.  Finally, by \eqref{2.10} and \eqref{4.33} we have
 $$u(x_j )\geq j\varphi(|x_j|)$$
 and thus \eqref{4.32} holds.
\end{proof}

\section{The case $\lambda\geq\frac{n}{n-2}$}\label{sec5}
In this section we prove Theorems \ref{thm1.1}--\ref{thm1.3} when
$\lambda\geq\frac{n}{n-2}$. For these values of $\lambda$, our
next result implies Theorem \ref{thm1.1}.
\begin{thm}\label{thm5.1}
 Suppose $u$ is a nonnegative solution of (\ref{1.1},\ref{1.2}) for some constants $\alpha\in(0,n)$,
 \begin{equation}\label{5.1}
  \lambda\geq\frac{n}{n-2}\quad \text{ and }\quad 0\leq\sigma<1-\frac{\alpha-2}{n}\lambda.
 \end{equation}
 Then
 \begin{equation}\label{5.2}
  u(x)=O(1)\quad\text{ as }x\to 0
 \end{equation}
and $u$ has a $C^1$ extension to the origin.
\end{thm}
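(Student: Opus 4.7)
The plan is to reduce the problem to proving that $v := u+1$ is bounded near the origin, so that Lemma \ref{lem2.4} furnishes the $C^1$ extension, and then to establish this boundedness by an iterative bootstrap on the Lebesgue integrability of $v$ using \eqref{2.2}, the representation \eqref{2.4}, and Riesz potential estimates.

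First, I would apply Lemma \ref{lem2.1} to produce $v$ satisfying \eqref{2.1}--\eqref{2.4}. Because $v\in L^\lambda(B_2(0))$ with $\lambda\ge n/(n-2)$, the constant $m$ in \eqref{2.4} must vanish (otherwise $v(x)\gtrsim |x|^{-(n-2)}$ near the origin would force $\int_{B_1(0)}v^\lambda=\infty$), leaving
\begin{equation*}
v(x)=h(x)+C\int_{|y|<1}\frac{-\Delta v(y)}{|x-y|^{n-2}}\,dy,\qquad 0<|x|<1,
\end{equation*}
with $h$ bounded. By Riesz potential estimates (the $L^s\to L^\infty$ mapping for Riesz potentials of order $2$ with $s>n/2$), it therefore suffices to show $-\Delta v\in L^s(B_1(0))$ for some $s>n/2$.

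The bootstrap runs as follows. Interpolating \eqref{2.1} with \eqref{2.3} yields $v\in L^p(B_1(0))$ for all $p\in[1,\lambda]$. Given $v\in L^{p_k}(B_1(0))$ with $p_k\ge\lambda$, Riesz potential estimates applied to $v^\lambda\in L^{p_k/\lambda}$ give $I_{n-\alpha}(v^\lambda)\in L^{q_k}(B_1(0))$ with $1/q_k=\lambda/p_k-(n-\alpha)/n$; H\"older with $v^\sigma\in L^{p_k/\sigma}$ and \eqref{2.2} then yield $-\Delta v\in L^{s_k}(B_1(0))$ with $1/s_k=(\lambda+\sigma)/p_k-(n-\alpha)/n$; and a further Riesz potential estimate applied to the representation above produces
\begin{equation*}
v\in L^{p_{k+1}}(B_1(0)),\qquad \frac{1}{p_{k+1}}=\frac{\lambda+\sigma}{p_k}-\frac{n+2-\alpha}{n},
\end{equation*}
or $v\in L^\infty$ once the right-hand side is nonpositive. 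The fixed point $p^*:=n(\lambda+\sigma-1)/(n+2-\alpha)$ of this iteration satisfies, by a direct algebraic check, $\lambda>p^* \iff \sigma<1-(\alpha-2)\lambda/n$, which is exactly the hypothesis. The identity $1/p_{k+1}-1/p^*=(\lambda+\sigma)(1/p_k-1/p^*)$ combined with $\lambda+\sigma>1$ (since $\lambda\ge n/(n-2)>1$) ensures that $1/p^*-1/p_k$ grows geometrically, so within finitely many steps $s_k>n/2$, completing the proof via Lemma \ref{lem2.4}.

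The main obstacle is the first iteration, where $v^\lambda$ lies only at the weak-type endpoint $L^1(B_1(0))$. This can be sidestepped by working with Lorentz spaces or, more elementarily, via the embedding $L^{q_0,\infty}(B_1(0))\hookrightarrow L^q(B_1(0))$ for $q<q_0$, yielding $v\in L^{p_1}(B_1(0))$ for some $p_1>\lambda$ at the cost of an arbitrarily small loss in the exponent that does not affect the subsequent iterations with strong $L^p$ estimates.
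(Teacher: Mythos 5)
Your overall strategy---kill the singular term in \eqref{2.4} using $\lambda\ge n/(n-2)$, then bootstrap $v$ through a chain of $L^p$ spaces via Riesz potential estimates and Lemma~\ref{lem2.4}---is the same as the paper's. The fixed-point identity $p^*=n(\lambda+\sigma-1)/(n+2-\alpha)$, the equivalence $\lambda>p^*\iff\sigma<1-\tfrac{\alpha-2}{n}\lambda$, and the geometric growth of $|1/p_k-1/p^*|$ are all correctly computed, and your remark about the weak-type endpoint at the first step (fixed by a Lorentz embedding or, as the paper does, by sacrificing a small $\varepsilon$ in the Riesz order so that Gilbarg--Trudinger Lemma 7.12 applies at $L^1$) is the right repair.

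However, the single recursion $1/p_{k+1}=(\lambda+\sigma)/p_k-(n+2-\alpha)/n$ is only the correct bookkeeping as long as $p_k<n\lambda/(n-\alpha)$, i.e.\ as long as $1/q_k=\lambda/p_k-(n-\alpha)/n>0$. Once $p_k\ge n\lambda/(n-\alpha)$ you no longer get $I_{n-\alpha}(v^\lambda)\in L^{q_k}$ with $q_k<\infty$; instead $I_{n-\alpha}(v^\lambda)\in L^\infty(B_1(0))$, and H\"older with $v^\sigma\in L^{p_k/\sigma}$ only yields $-\Delta v\in L^{p_k/\sigma}$, so the true recursion becomes the \emph{weaker} $1/p_{k+1}=\sigma/p_k-2/n$. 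The two recursions agree exactly at $p_k=n\lambda/(n-\alpha)$ and your formula overstates the gain for $p_k$ past that threshold, so the geometric-growth argument by a factor $\lambda+\sigma$ does not carry through. This gap is live, not merely cosmetic: your formula first returns a nonpositive $1/p_{k+1}$ when $p_k\ge n(\lambda+\sigma)/(n+2-\alpha)$, and comparing with the threshold $n\lambda/(n-\alpha)$ shows the crossover happens \emph{before} that whenever $\sigma(n-\alpha)>2\lambda$, which is compatible with \eqref{5.1} (e.g.\ $n=3$, $\alpha=0.1$, $\lambda=3$, $\sigma=2.8$). The paper therefore runs a two-stage iteration: Lemma~\ref{lem5.1} pushes $p$ up to $n\lambda/(n-\alpha-\varepsilon)$, and then a separate iteration on $1/q=\sigma/p-(2-\varepsilon)/n$ starting from \eqref{5.19}--\eqref{5.20}, where \eqref{5.7} together with $p>n\lambda/(n-\alpha)$ is used to show a uniform gain $1/p-1/q>C(n,\alpha)>0$. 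Your argument needs this second stage added (and the check that, because $\sigma-1<(2-\alpha)\lambda/n$, the new fixed point $n(\sigma-1)/2$ lies strictly below $n\lambda/(n-\alpha)$ so the second recursion indeed makes uniform progress).
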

 
\begin{proof}
 Let $v=u+1$.  Then by Lemma \ref{lem2.1} we have that \eqref{2.1}--\eqref{2.4} hold.  To prove \eqref{5.2} it clearly suffices to prove
 \begin{equation}\label{5.3}
  v(x)=O(1)\quad\text{ as }x\to 0.
 \end{equation}

 By \eqref{2.1} and \eqref{5.1}, the constant $m$ in \eqref{2.4} is zero and thus by \eqref{2.4}
 \begin{equation}\label{5.4}
  v(x)\leq C\left[1+\int_{|y|<1}\frac{-\Delta v(y)}{|x-y|^{n-2}}dy\right]\quad\text{ for }0<|x|<1
 \end{equation}
 for some positive constant $C$.
 
 Since increasing $\sigma$ increases the right side of the second inequality in \eqref{2.2}$_1$, we can assume instead of \eqref{5.1} that
 \begin{equation}\label{5.5}
  \lambda\geq\frac{n}{n-2}\quad\text{ and }\quad 0<\sigma<1-\frac{\alpha-2}{n}\lambda
 \end{equation}
 which implies
 \begin{equation}\label{5.6}
  \frac{\sigma}{\lambda}<\frac{2-\alpha}{n}+\frac{1}{\lambda}\leq\frac{2-\alpha}{n}+\frac{n-2}{n}=\frac{n-\alpha}{n}.
 \end{equation}
 By \eqref{5.5} there exists
 $\varepsilon=\varepsilon(n,\lambda,\sigma,\alpha)\in(0,1)$ such that
 \[
\alpha+\varepsilon<n\quad\text{ and
}\quad\sigma<1-\frac{\alpha+\varepsilon-2}{n}\lambda
\]
 which implies
 \begin{equation}\label{5.7}
  \frac{\sigma-1}{\lambda}<\frac{2-\alpha-\varepsilon}{n}.
 \end{equation}
 
 For the proof of Theorem \ref{thm5.1} we will need the following lemma.
 \begin{lem}\label{lem5.1}
  Suppose
  \begin{equation}\label{5.8}
   v\in L^p (B_1(0))
  \end{equation}
  for some constant
  \begin{equation}\label{5.9}
   p\in\left[\lambda,\frac{n\lambda}{n-\alpha-\varepsilon}\right).
  \end{equation}
  Then either
  \begin{equation}\label{5.10}
   v\in L^{\frac{n\lambda}{n-\alpha-\varepsilon}}(B_1(0))
  \end{equation}
  or there exists a positive constant $C_0 =C_0 (n,\lambda,\sigma,\alpha)$ such that
  \begin{equation}\label{5.11}
   v\in L^q (B_1(0))
  \end{equation}
  for some $q\in(p,\infty)$ satisfying
  \begin{equation}\label{5.12}
   \frac{1}{p}-\frac{1}{q}\geq C_0.
  \end{equation}
 \end{lem} 

 \begin{proof}
  Define $p_2$ by
  \begin{equation}\label{5.13}
   \frac{\lambda}{p}-\frac{1}{p_2}=\frac{n-\alpha-\varepsilon}{n}.
  \end{equation}
  Then by \eqref{5.9}
  $$1\leq\frac{p}{\lambda}<p_2 <\infty$$
  and thus by Riesz potential estimates and \eqref{5.8} we have
  \begin{equation}\label{5.14}
   \| I_{n-\alpha}v^\lambda \|_{p_2}\leq C\| v^\lambda \|_{\frac{p}{\lambda}}=C\| v\|^{\lambda}_{p}<\infty
  \end{equation}
  where $I_\beta$ is defined in \eqref{2.2.5}.

Define $p_3 >0$ by
  \begin{equation}\label{5.15}
   \frac{1}{p_3}=\frac{1}{p_2}+\frac{\sigma}{p}.
  \end{equation}
  Then by H\"older's inequality
  \begin{align*}
   \|((I_{n-\alpha}v^\lambda )&v^\sigma )^{p_3}\|_1
   \leq\|(I_{n-\alpha}v^\lambda
   )^{p_3}\|_{\frac{p_2}{p_3}}\| v^{\sigma
     p_3}\|_{\frac{p}{\sigma p_3}}\\
   &=\| I_{n-\alpha}v^\lambda \|^{p_3}_{p_2}\|
   v\|^{\sigma p_3}_{p}<\infty
  \end{align*}
  by \eqref{5.8} and \eqref{5.14}.  Hence by \eqref{2.2}
  \begin{equation}\label{5.16}
   -\Delta v\in L^{p_3}(B_1(0)).
  \end{equation}
  Also by \eqref{5.15}, \eqref{5.13}, \eqref{5.9}, and \eqref{5.7} we have
  \begin{align*}
   \frac{1}{p_3}&=\frac{\lambda+\sigma}{p}-\frac{n-\alpha-\varepsilon}{n}\leq\frac{\lambda+\sigma}{\lambda}-\frac{n-\alpha-\varepsilon}{n}\\
   &=\frac{\sigma}{\lambda}+\frac{\alpha+\varepsilon}{n}<\frac{1}{\lambda}-\frac{\alpha+\varepsilon-2}{n}+\frac{\alpha+\varepsilon}{n}\\
   &=\frac{1}{\lambda}+\frac{2}{n}.
  \end{align*}
  Thus by \eqref{5.5} we see that
  \begin{equation}\label{5.17}
   p_3 >1.
  \end{equation}
  
  \noindent \textbf{Case I.} Suppose $p_3 \geq\frac{n}{2}$.  Then by \eqref{5.16}, \eqref{5.4}, and Riesz potential estimates we have $v\in L^q (B_1(0))$ for all $q>1$ which implies \eqref{5.10}.
  \medskip

  \noindent \textbf{Case II.}  Suppose $p_3 <\frac{n}{2}$.  Define $q$ by
  \begin{equation}\label{5.18}
   \frac{1}{p_3}-\frac{1}{q}=\frac{2}{n}.
  \end{equation}
  Then by \eqref{5.17}
  $$1<p_3 <q<\infty.$$
  Hence by \eqref{5.16}, \eqref{5.4} and Riesz potential estimates we have \eqref{5.11} holds.  
  
  Also by \eqref{5.18}, \eqref{5.15}, \eqref{5.13}. \eqref{5.9}, and \eqref{5.7} we get
  \begin{align*}
   \frac{1}{p}-\frac{1}{q}&=\frac{1}{p}+\frac{2}{n}-\frac{1}{p_3}=\frac{1}{p}+\frac{2}{n}-\frac{\sigma}{p}-\frac{\lambda}{p}+1-\frac{\alpha+\varepsilon}{n}\\
   &=-\frac{\lambda+\sigma-1}{p}+1-\frac{\alpha+\varepsilon-2}{n}\\
   &\geq\frac{1-(\lambda+\sigma)}{\lambda}+1+\frac{2-\alpha-\varepsilon}{n}>0.
  \end{align*}
  Thus \eqref{5.12} holds.
 \end{proof} 
  
  We now return to the proof of Theorem \ref{thm5.1}.  By \eqref{2.1}, $v\in L^\lambda (B_1(0))$.  Starting with this fact and iterating Lemma \ref{lem5.1} a finite number of times we see that \eqref{5.10} holds.  In particular
  \begin{equation}\label{5.19}
   v\in L^p (B_1(0))
  \end{equation}
  for some
  \begin{equation}\label{5.20}
   p>\frac{n\lambda}{n-\alpha}.
  \end{equation}
  Hence $v^\lambda \in L^{\frac{p}{\lambda}}(B_1(0))$ and
  $\frac{p}{\lambda}>\frac{n}{n-\alpha}$.  Thus by Riesz potential
  estimates $I_{n-\alpha}(v^\lambda )\in L^\infty (B_1(0))$.  So by
  \eqref{2.2}
  \begin{equation}\label{5.21}
   0\leq-\Delta v<Cv^\sigma \text{ in }B_1(0)\backslash\{0\}.
  \end{equation}
  Hence by \eqref{5.19}, $-\Delta v\in L^{\frac{p}{\sigma}}(B_1(0))$ and by \eqref{5.20} and \eqref{5.6}
  $$\frac{p}{\sigma}>\frac{n}{n-\alpha}\frac{\lambda}{\sigma}>\left(\frac{n}{n-\alpha}\right)^2 >1.$$
  Thus by \eqref{5.4} and Riesz potential estimates
  \begin{equation}\label{5.22}
   v\in L^q (B_1(0))\quad\text{ where }q=
   \begin{cases}
    \infty & \text{if }\frac{p}{\sigma}\geq\frac{n}{2-\varepsilon}\\
    \frac{1}{\frac{\sigma}{p}-\frac{2-\varepsilon}{n}} & \text{if }\frac{p}{\sigma}<\frac{n}{2-\varepsilon}.
   \end{cases}
  \end{equation}
  If $q=\infty$ then \eqref{5.3} holds.  Hence we can assume $\frac{p}{\sigma}<\frac{n}{2-\varepsilon}$.  Then by \eqref{5.22}
  $$\frac{1}{p}-\frac{1}{q}=\frac{1-\sigma}{p}+\frac{2-\varepsilon}{n}.$$
  Thus, if $\sigma\in(0,1]$ then 
  $$\frac{1}{p}-\frac{1}{q}>\frac{1}{n}.$$
  On the other hand, if $\sigma>1$ then by \eqref{5.20} and \eqref{5.7}
  \begin{align*}
   \frac{1}{p}-\frac{1}{q}&=\frac{2-\varepsilon}{n}-\frac{\sigma-1}{p}\\
   &>\frac{2-\varepsilon}{n}-\frac{\sigma-1}{\lambda}\frac{n-\alpha}{n}\\
   &>\frac{2-\varepsilon}{n}-\frac{2-\alpha-\varepsilon}{n}=\frac{\alpha}{n}.
  \end{align*}
  Thus for $\sigma>0$ we have
  $$\frac{1}{p}-\frac{1}{q}>C(n,\alpha)>0.$$
  Hence, after a finite number of iterations of the procedure of going from \eqref{5.19} to \eqref{5.22} we get $v\in L^\infty (B_1(0))$ and hence we see again that \eqref{5.3} holds.

Finally by Lemma \ref{lem2.4}, $u$ has a $C^1$ extension to the origin.
\end{proof}

The result below implies Theorems \ref{thm1.2} and \ref{thm1.3}
when $\lambda\geq\frac{n}{n-2}$.
\begin{thm}\label{thm5.2}
  Suppose $\alpha,\lambda$, and $\sigma$ are constants satisfying
  $\alpha\in(0,n)$,
\[
\lambda\geq\frac{n}{n-2},\quad \sigma\ge 0,\quad\text{ and
}\quad\sigma>1-\frac{\alpha-2}{n}\lambda.
\]
 Let $\varphi:(0,1)\to(0,\infty)$ be a continuous function satisfying
 $$\lim_{t\to 0^+}\varphi(t)=\infty.$$
 Then there exists a nonnegative solution $u$ of (\ref{1.1},\ref{1.2}) such that
 \begin{equation}\label{5.23}
  u(x)\neq O(\varphi(|x|))\quad\text{ as }x\to 0.
 \end{equation}
\end{thm}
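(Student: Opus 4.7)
The plan is to handle the strict case $\lambda>\frac{n}{n-2}$ via Lemma \ref{lem2.3}, and to observe that the equality case $\lambda=\frac{n}{n-2}$ is already covered by Theorem \ref{thm4.2}. Indeed, for $\lambda=\frac{n}{n-2}$ the hypothesis $\sigma>1-\frac{\alpha-2}{n}\lambda$ reduces to $\sigma>\frac{n-\alpha}{n-2}$, which is exactly the third branch of the hypothesis of Theorem \ref{thm4.2}.

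So assume $\lambda>\frac{n}{n-2}$. Choose sequences $\{x_j\}\subset\mathbb{R}^n$, $\{\varepsilon_j\}\subset(0,1)$ with $x_j\to 0$ satisfying \eqref{2.20} and \eqref{2.21}. The key observation is that the assumption $\sigma>1-\frac{\alpha-2}{n}\lambda$ is equivalent to
\[
\theta:=\alpha-2+\frac{n(\sigma-1)}{\lambda}>0,
\]
so we may choose the radii $r_j\in(0,|x_j|/4)$ so small that simultaneously
\begin{equation}\label{plan:two}
\frac{A\varepsilon_j}{r_j^{n/\lambda}}>j\varphi(|x_j|)
\quad\text{and}\quad
r_j^{\theta}\le A^\sigma B\,\varepsilon_j^{\lambda+\sigma-1},
\end{equation}
where $A,B$ are the constants from Lemma \ref{lem2.3}.

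Let $u$ be the function produced by Lemma \ref{lem2.3} for these data. Then $u$ satisfies \eqref{1.1} by \eqref{2.22}, and \eqref{1.2} is trivial on $B_2(0)\setminus(\{0\}\cup\bigcup_j B_{r_j}(x_j))$ by \eqref{2.24}. For $x\in B_{r_j}(x_j)$, combining \eqref{2.23}, the second inequality of \eqref{plan:two}, and the lower bounds \eqref{2.25}, \eqref{2.26}, we compute
\begin{align*}
-\Delta u(x)\le\frac{\varepsilon_j}{r_j^{2+n/\lambda}}
&=\frac{r_j^{\theta}}{A^\sigma B\,\varepsilon_j^{\lambda+\sigma-1}}\cdot
\frac{B\,\varepsilon_j^\lambda}{r_j^\alpha}\cdot
\Bigl(\frac{A\varepsilon_j}{r_j^{n/\lambda}}\Bigr)^\sigma\\
&\le\Bigl(\int_{|y-x_j|<r_j}\frac{u(y)^\lambda\,dy}{|x-y|^\alpha}\Bigr)u(x)^\sigma
\le\bigl(|x|^{-\alpha}*u^\lambda\bigr)(x)\,u(x)^\sigma,
\end{align*}
verifying \eqref{1.2} on $B_2(0)\setminus\{0\}$. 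Finally, the first inequality of \eqref{plan:two} together with \eqref{2.25} gives $u(x_j)\ge A\varepsilon_j/r_j^{n/\lambda}>j\varphi(|x_j|)$, so \eqref{5.23} holds.

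The only substantive point is the positivity of $\theta$, which is purely arithmetic; the rest is a direct bookkeeping exercise parallel to the proofs of Theorems \ref{thm3.2} and \ref{thm4.2}. There is no real obstacle beyond correctly tracking the exponents of $r_j$ and $\varepsilon_j$ so that the two requirements in \eqref{plan:two} can be met simultaneously with $x_j$ and $\varepsilon_j$ fixed — and they can be, precisely because $\theta>0$ makes the second inequality a constraint of the form $r_j$ small, while the first is also a constraint of the form $r_j$ small (the exponent $n/\lambda>0$).
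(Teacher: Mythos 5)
Your proposal is correct and takes essentially the same route as the paper: reduce $\lambda=\frac{n}{n-2}$ to Theorem \ref{thm4.2}, then for $\lambda>\frac{n}{n-2}$ invoke Lemma \ref{lem2.3} and shrink $r_j$ to meet the two constraints. Your exponent $\theta=\alpha-2+\frac{n(\sigma-1)}{\lambda}$ is algebraically identical to the paper's $\frac{n}{\lambda}\bigl(\sigma-(1-\frac{\alpha-2}{n}\lambda)\bigr)$ in \eqref{5.25}, so this is the same proof.
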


\begin{proof}
 If $\lambda=\frac{n}{n-2}$ then Theorem \ref{thm5.2} follows 
from Theorem \ref{thm4.2}.  Hence we can assume $\lambda>\frac{n}{n-2}$.
 
 Let $\{x_j \}\subset\mathbb{R}^n$ and $\{r_j \},\{e_j \}\subset(0,1)$ be sequences satisfying \eqref{2.20} and \eqref{2.21}.  Holding $x_j$ and $\varepsilon_j$ fixed and decreasing $r_j$ to a sufficiently small positive number we 
 can assume
 \begin{equation}\label{5.24}
  \frac{A\varepsilon_j}{r^{n/\lambda}_{j}}>j\varphi(|x_j |)\quad\text{ for }j=1,2,...
 \end{equation}
 and
 \begin{equation}\label{5.25}
  r^{\frac{n}{\lambda}(\sigma-(1-\frac{\alpha-2}{n}\lambda))}_{j}<A^\sigma B\varepsilon^{\lambda+\sigma-1}_{j}\quad\text{ for }j=1,2,...
 \end{equation}
 where $A$ and $B$ are as in Lemma \ref{lem2.3}.  
Let $u$ be as in Lemma \ref{lem2.3}.  By \eqref{2.24} $u$ satisfies \eqref{1.2} in $B_2 (0)\backslash(\{0\}\cup\cup^{\infty}_{j=1}B_{r_j}(x_j ))$.  Also, for $x\in B_{r_j}(x_j )$, it follows from 
 \eqref{2.23}, \eqref{5.25}, \eqref{2.26}, and \eqref{2.25} that 
 \begin{align*}
  0&\leq-\Delta u\leq\frac{\varepsilon_j}{r^{2+n/\lambda}_{j}}=\frac{r^{\frac{n}{\lambda}(\sigma-(1-\frac{\alpha-2}{n}\lambda))}_{j}}{A^\sigma B\varepsilon^{\lambda+\sigma-1}_{j}}\left(\frac{B\varepsilon^{\lambda}_{j}}{r^\alpha_{j}}\right)\left(\frac{A\varepsilon_j}{r^{n/\lambda}_{j}}\right)^\sigma \\
  &\leq(|x|^\alpha *u^\lambda )u^\sigma .
 \end{align*}
 Thus $u$ satisfies \eqref{1.2} in $B_2 (0)\backslash\{0\}$.  
 
 Finally by \eqref{2.25} and \eqref{5.24} we have
 $$u(x_j )\geq\frac{A\varepsilon_j}{r^{n/\lambda}_{j}}>j\varphi(|x_j |)$$
 and thus \eqref{5.23} holds.
\end{proof}

\end{document}